\newcommand*{\relrelbarsep}{.386ex}
\newcommand*{\relrelbar}{%
  \mathrel{%
    \mathpalette\@relrelbar\relrelbarsep
  }%
}
\newcommand*{\@relrelbar}[2]{%
  \raise#2\hbox to 0pt{$\m@th#1\relbar$\hss}%
  \lower#2\hbox{$\m@th#1\relbar$}%
}
\providecommand*{\rightrightarrowsfill@}{%
  \arrowfill@\relrelbar\relrelbar\rightrightarrows
}
\providecommand*{\leftleftarrowsfill@}{%
  \arrowfill@\leftleftarrows\relrelbar\relrelbar
}
\providecommand*{\xrightrightarrows}[2][]{%
  \ext@arrow 0359\rightrightarrowsfill@{#1}{#2}%
}
\providecommand*{\xleftleftarrows}[2][]{%
  \ext@arrow 3095\leftleftarrowsfill@{#1}{#2}%
}
\begin{document}

\title{Descent of dg cohesive modules for open covers on complex manifolds}

\author{Zhaoting Wei
 \thanks{Email: \texttt{zhaoting.wei@tamuc.edu}}}
\affil{Department of Mathematics, Texas A\&M University-Commerce\\Commerce, TX, 75429, USA}

\maketitle

\newtheorem{thm}{Theorem}[section]
\newtheorem{lemma}[thm]{Lemma}
\newtheorem{prop}[thm]{Proposition}
\newtheorem{coro}[thm]{Corollary}
\newtheorem{ques}[thm]{Question}
\newtheorem{conj}[thm]{Conjecture}
\theoremstyle{definition}\newtheorem{defi}{Definition}[section]
\theoremstyle{remark}\newtheorem{eg}{Example}
\theoremstyle{remark}\newtheorem{rmk}{Remark}
\theoremstyle{remark}\newtheorem{ctn}{Caution}

\newcommand{\Holim}{\text{Holim}}
\newcommand{\diff}{\text{d}}
\newcommand{\For}{\text{For}}
\newcommand{\Mod}{\text{Mod}}
\newcommand{\Modb}{\text{Mod-}}
\newcommand{\perf}{\text{perf}}
\newcommand{\Perf}{\text{Perf}}
\newcommand{\coh}{\text{coh}}
\newcommand{\id}{\text{id}}

\begin{abstract}
In this paper we study the descent problem of cohesive modules on  complex manifolds. For a complex manifold $X$ we could consider the Dolbeault dg-algebra $\mathcal{A}(X)$ on it and Block in 2006 introduced a dg-category $\mathcal{P}_{\mathcal{A}(X)}$, called cohesive modules, associated with $\mathcal{A}(X)$. The same construction works for any open subset $U\subset X$ and we obtain a dg-presheaf on $X$ given by $U\mapsto \mathcal{P}_{\mathcal{A}(U)}$. In this paper we prove that this dg-presheaf satisfies the descent property for any locally finite open cover of a complex manifold $X$. This generalizes part of the  results of Ben-Bassat and Block in 2012, which studied the case that $X$ is covered by two open subsets.

Key words: descent, cohesive modules, twisted complexes, dg-categories

Mathematics Subject Classification 2000: 18D20, 46L87, 18G55
\end{abstract}

\section{Introduction}
In \cite{block2010duality}, Block assigned a dg-category $\mathcal{P}_{\mathcal{A}}$, called \emph{cohesive modules},  to a (curved) dg-algebra $\mathcal{A}$. The dg-category of cohesive modules provides a way to enhance many well-known triangulated categories and has been studied in \cite{block2010mukai}, \cite{block2014higher}, \cite{yu2016dolbeault}, \cite{qiang2016bott}.

In particular, for a compact complex manifold $X$ we consider the Dolbeault dg algebra $\mathcal{A}^{\bullet}(X)=(A^{0,\bullet}(X),\bar{\partial})$. In this case a cohesive module consists of a complex of smooth vector bundles on $X$  with a $\bar{\partial}$-$\mathbb{Z}$-connection. Block proved in \cite{block2010duality} that $\mathcal{P}_{\mathcal{A}(X)}$ gives a dg-enhancement of $D^b_{\text{coh}}(X)$, the bounded derived category of coherent sheaves on $X$. Based on this result, in \cite{bismut2021coherent} the authors proved the Grothendieck-Riemann-Roch theorem for coherent sheaves on compact complex manifolds. Moreover, \cite{chuang2021maurer} generalizes the result in \cite{block2010duality} to the case that $X$ is non-compact with a slightly more restricted definition of coherent sheaves.

The descent problem is also one of the original motivations of considering dg-enhancement of triangulated categories. In \cite{ben2013milnor} Ben-Bassat and Block proved that for a compact complex manifold $X$ and an open cover $\{U_1,U_2\}$ of $X$, the natural restriction functor
\begin{equation}\label{equation: descent for two open subsets}
\mathcal{P}_{\mathcal{A}(X)}\to \mathcal{P}_{\mathcal{A}(\overline{U_1})}\times^h_{\mathcal{P}_{\mathcal{A}(\overline{U_1}\cap \overline{U_2})}} \mathcal{P}_{\mathcal{A}(\overline{U_2})}
\end{equation}
is a quasi-equivalence of dg-categories, where $\mathcal{P}_{\mathcal{A}(\overline{U_1})}\times^h_{\mathcal{P}_{\mathcal{A}(\overline{U_1}\cap \overline{U_2})}} \mathcal{P}_{\mathcal{A}(\overline{U_2})}$ denotes the homotopy fiber product. See \cite{ben2013milnor} Theorem 6.7 and Theorem 7.4 for details.

\begin{rmk}
It is well-known that for derived categories, the natural restriction functor $D^b_{\text{coh}}(X)\to D^b_{\text{coh}}(\overline{U_1})\times^h_{D^b_{\text{coh}}(\overline{U_1}\cap \overline{U_2}) } D^b_{\text{coh}}(\overline{U_2})$ is not an equivalence of triangulated categories, see \cite{bertrand2011lectures} Section 2.2.
\end{rmk}

In this paper we study the descent of $\mathcal{P}_{\mathcal{A}(X)}$ for an arbitrary locally finite open cover $\mathcal{U}=\{U_i\}$ of $X$. In this case the homotopy fiber product on the right hand side of (\ref{equation: descent for two open subsets}) should be replaced by the homotopy limit, and we prove that the natural  functor
\begin{equation}\label{equation: descent for finite open subsets, introduction}
\mathcal{P}_{\mathcal{A}(X)}\to \Holim_{\mathcal{U}} \mathcal{P}_{\mathcal{A}(U_i)}
\end{equation}
is a quasi-equivalence of dg-categories.

According to \cite{block2017explicit}, the homotopy limit $\Holim_{\mathcal{U}} \mathcal{P}_{\mathcal{A}(U_i)}$ is quasi-equivalent to the dg-category of \emph{twisted complexes}, $\text{Tw}(X,\mathcal{P}_{\mathcal{A}},U_i)$ (See Section \ref{section: review of twisted complex} below). Therefore the main result of this paper could be stated as:

\begin{thm}\label{thm: cohesive modules satisfies descent for open covers, introduction}[See Theorem \ref{thm: cohesive modules satisfies descent for open covers} below]
Let $X$ be a  complex manifold and $\{U_i\}$ be a  finite open cover of $X$. Let $\mathcal{A}=(\mathcal{A}^{0,\bullet},\bar{\partial},0)$ be the Dolbeault dg-algebra on $X$ and $\mathcal{P}_{\mathcal{A}}$ be the dg-category of cohesive modules. Let $\text{Tw}(X, \mathcal{P}_{\mathcal{A}}, U_i)$ be the dg-category of globally bounded twisted complexes on $X$. Then the natural functor
$$
\mathcal{T}:\mathcal{P}_{\mathcal{A}(X)}\to \text{Tw}(X, \mathcal{P}_{\mathcal{A}}, U_i)
$$
is a dg-quasi-equivalence of dg-categories.
\end{thm}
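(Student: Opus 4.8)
The plan is to show that $\mathcal{T}$ is a dg-quasi-equivalence by verifying the two defining conditions separately: that $\mathcal{T}$ is \emph{quasi-fully faithful}, i.e. for all cohesive modules $E,F$ on $X$ the chain map
$$
\mathcal{T}_{E,F}\colon \mathrm{Hom}^{\bullet}_{\mathcal{P}_{\mathcal{A}(X)}}(E,F)\longrightarrow \mathrm{Hom}^{\bullet}_{\mathrm{Tw}(X,\mathcal{P}_{\mathcal{A}},U_i)}(\mathcal{T}E,\mathcal{T}F)
$$
is a quasi-isomorphism, and that the induced functor $H^{0}(\mathcal{T})$ on homotopy categories is \emph{essentially surjective}. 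Since $X$ is compact the locally finite cover $\{U_i\}$ is finite; this is what keeps every graded bundle in sight of finite rank, and I use it throughout. I also use freely the explicit description of the Hom-complexes of $\mathrm{Tw}(X,\mathcal{P}_{\mathcal{A}},U_i)$ as \v{C}ech-type total complexes recalled in Section~\ref{section: review of twisted complex}.

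\textbf{Quasi-full-faithfulness.} Because $\mathcal{T}E$ and $\mathcal{T}F$ carry the trivial twisting datum --- identity transition maps on all overlaps and vanishing higher components --- the differential on the right-hand complex above collapses to (internal differential) $\pm$ (\v{C}ech differential), so that it is exactly the total complex of the \v{C}ech double complex, with respect to $\{U_i\}$, of the presheaf
$$
U\longmapsto \mathrm{Hom}^{\bullet}_{\mathcal{P}_{\mathcal{A}(U)}}(E|_{U},F|_{U}),
$$
with $\mathcal{T}_{E,F}$ the inclusion of the degree-zero \v{C}ech term. This presheaf is a complex of \emph{sheaves} whose terms are sheaves of smooth sections of the bundles $\mathcal{A}^{0,q}\otimes\underline{\mathrm{Hom}}(V_E,V_F)$, hence fine, hence soft. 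Choosing a partition of unity $\{\rho_i\}$ subordinate to $\{U_i\}$ then gives, in each fixed internal degree, an explicit contracting homotopy $h=\sum_i\rho_i(\cdots)$ for the \v{C}ech differential, so the \v{C}ech columns of the double complex are acyclic except in degree $0$, where they compute $\mathrm{Hom}^{\bullet}_{\mathcal{P}_{\mathcal{A}(X)}}(E,F)$. The spectral sequence of the double complex degenerates at $E_1$ and shows that $\mathcal{T}_{E,F}$ is a quasi-isomorphism. This is the step that uses the analytic nature of the Dolbeault algebra, and it is precisely what fails for holomorphic or algebraic coefficients, consistently with the Remark in the Introduction.

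\textbf{Essential surjectivity.} Here one must start from an arbitrary twisted complex $\mathcal{E}=\bigl(\{(V_i,\mathbb{E}_i)\}_i,\{a_{i_0\cdots i_k}\}\bigr)$ and produce a global cohesive module $E\in\mathcal{P}_{\mathcal{A}(X)}$ together with a homotopy equivalence $\mathcal{T}E\simeq\mathcal{E}$ in $\mathrm{Tw}(X,\mathcal{P}_{\mathcal{A}},U_i)$. The strategy is to regard the whole of $\mathcal{E}$ as a Maurer--Cartan element of the \v{C}ech--Dolbeault differential graded Lie algebra assembled from the endomorphism complexes of the $V_i$, and to transport it --- using the partition of unity exactly as above as a homotopy --- to the global Dolbeault differential graded Lie algebra of endomorphisms of a genuine global graded smooth bundle $V$ on $X$. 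Constructing $V$ is the delicate point: one first replaces the degree-zero transition data $a_{ij}$ by strict isomorphisms of underlying graded bundles --- possible, at the cost of enlarging each $V_i$ by a contractible cohesive module, because the cover is finite --- compatibly with the higher $a_{i_0\cdots i_k}$, and then glues the enlarged bundles along these isomorphisms to obtain $V$; the higher transition maps and the $\rho_i$ then organize, through the transported Maurer--Cartan element, into a $\bar{\partial}$-$\mathbb{Z}$-connection $\mathbb{E}$ on $V$ whose flatness $\mathbb{E}\circ\mathbb{E}=0$ is equivalent to the twisted-complex equations for $\mathcal{E}$. The same partition-of-unity data produce a closed degree-zero morphism $\mathcal{T}E\to\mathcal{E}$, which is a homotopy equivalence by the quasi-full-faithfulness just proved. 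An alternative organization of this step is induction on the number of members of the cover: writing $\{U_1,\dots,U_n\}$ as the two-element cover $\{U_1\cup\cdots\cup U_{n-1},\,U_n\}$ and using that the homotopy limit over the \v{C}ech nerve can be computed iteratively, one reduces to a two-open-set descent statement in the spirit of Ben-Bassat--Block together with the inductive hypothesis applied to the $(n-1)$-element covers of $U_1\cup\cdots\cup U_{n-1}$ and of $U_n\cap(U_1\cup\cdots\cup U_{n-1})$.

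\textbf{Where the difficulty lies.} The main obstacle is the construction of the global bundle $V$ and connection $\mathbb{E}$ in the essential-surjectivity step: rectifying the homotopy-coherent transition data $\{a_{i_0\cdots i_k}\}$ to strict gluing data, and carrying along all the signs that convert the twisted-complex relations into $\mathbb{E}\circ\mathbb{E}=0$ and the gluing datum into a closed degree-zero morphism. Everything else reduces either to the \v{C}ech total-complex description of the Hom-complexes recalled in Section~\ref{section: review of twisted complex}, to the softness of the Dolbeault-type sheaves, or to standard \v{C}ech and spectral-sequence arguments. Once quasi-full-faithfulness and essential surjectivity are established, $\mathcal{T}$ is a dg-quasi-equivalence, as claimed.
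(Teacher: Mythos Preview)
Your quasi-full-faithfulness argument is correct and in fact more direct than what the paper does. The paper never isolates full faithfulness as a separate step; instead it packages both halves of the quasi-equivalence into the abstract adjunction criterion of Lemma~\ref{lemma: adjunctions of big and small dg-categories}, applied to $\mathcal{T}\dashv\mathcal{S}$ on the enlarged categories $\mathcal{C}_{\mathcal{A}(X)}$ and $\text{Tw}(X,\mathcal{C}_{\mathcal{A}},U_i)$, where $\mathcal{S}$ is the ``sheafification'' functor sending a twisted complex to the total complex of its \v{C}ech bicomplex (Definition~\ref{defi: sheaf associated to twisted complex}).

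The essential-surjectivity step, however, has a genuine gap exactly where you flag it. You propose to rectify the homotopy-coherent transition data by ``enlarging each $V_i$ by a contractible cohesive module'' so that the $a_{ij}$ become strict isomorphisms of graded bundles satisfying the cocycle condition on the nose; but the graded ranks of the $V_i$ need not agree, the $a_{ij}$ are only quasi-isomorphisms, and the cocycle holds only up to the homotopies $a_{ijk}$. Producing from this a genuine \v{C}ech $1$-cocycle of graded-bundle isomorphisms, while keeping everything bounded and finitely generated projective, is the entire content of the problem, not a preliminary step that can be dispatched in a sentence. The paper solves it by a different mechanism: it first applies $\mathcal{S}$ to get a global but only \emph{quasi}-cohesive module $\mathcal{S}(\mathcal{F})$, and then (Section~\ref{section: the patching of underlying complexes}) runs a Thomason--Trobaugh--style downward induction that builds a bounded complex of finitely generated projective $\mathcal{A}^0(X)$-modules $(E^\bullet,d)$ together with a quasi-isomorphism to $\For(\mathcal{S}(\mathcal{F}))$ one degree at a time, via the mapping cone and a ``descent modulo $Q$'' lemma (Lemma~\ref{lemma: higher gluing bundles by twisted cochains}) proved with partitions of unity. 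Block's quasi-representability theorem (Theorem~\ref{thm: quasi-representability of h tilde X}) then upgrades this $\mathcal{A}^0$-level resolution to a cohesive module $\mathcal{E}\in\mathcal{P}_{\mathcal{A}(X)}$ with a quasi-isomorphism $\mathcal{E}\to\mathcal{S}(\mathcal{F})$ in $\mathcal{C}_{\mathcal{A}(X)}$, and the adjunction converts it into the homotopy equivalence $\mathcal{T}(\mathcal{E})\to\mathcal{F}$. Your inductive alternative (reduce to two open sets) is not free either: the Ben-Bassat--Block result and the present theorem are stated for compact $X$, whereas the intermediate unions $U_1\cup\cdots\cup U_{n-1}$ are open, so before the induction can start you would have to re-establish the two-set case relative to the closure convention of Definition~\ref{defi: cohesive modules on open subset}.
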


This result can be considered as a dg-categorification of the \v{C}ech-Dolbeault theory of cohomologies on complex manifolds.

Let us briefly mention the strategy of the proof. We want to construct a right adjoint functor of $\mathcal{T}$, $\mathcal{S}:\text{Tw}(X, \mathcal{P}_{\mathcal{A}}, U_i)\to \mathcal{P}_{\mathcal{A}(X)}$. However, we will see that the image of $\mathcal{S}$ cannot be contained in $\mathcal{P}_{\mathcal{A}(X)}$. Therefore we have to enlarge our dg-category to quasi-cohesive modules $\mathcal{C}_{\mathcal{A}(X)}$ and get an adjoint pair
$$
\mathcal{T}:\mathcal{C}_{\mathcal{A}(X)} \rightleftarrows \text{Tw}(X, \mathcal{C}_{\mathcal{A}}, U_i): \mathcal{S}.
$$
Then the proof of Theorem \ref{thm: cohesive modules satisfies descent for open covers, introduction} consists of $(1)$ a detailed study of the pair $\mathcal{T}\rightleftarrows \mathcal{S}$ restricted to underlying complexes; and $(2)$ some general results on dg-categories and cohesive modules.

\begin{rmk}
The strategy of the proof of Theorem \ref{thm: cohesive modules satisfies descent for open covers, introduction} is similar to the proof of the main theorem in \cite{ben2013milnor}. Nevertheless the way of gluing underlying complexes in this paper is very different from that in \cite[Section 5]{ben2013milnor}. See Section \ref{section: the gluing of underlying complexes} below.
\end{rmk}

\begin{rmk}
Actually we will prove Theorem \ref{thm: cohesive modules satisfies descent for open covers, introduction} for locally finite open cover and the dg-category of \emph{globally bounded} twisted complexes. See Section \ref{subsec: globally bounded twisted} and Theorem \ref{thm: cohesive modules satisfies descent for open covers} below. If the cover is finite, then any twisted complex is globally bounded.
\end{rmk}

This paper is organized as follows: In Section \ref{section: review of cohesive modules} we review cohesive modules; in Section \ref{section: review of twisted complex} we review twisted complexes. In particular we define the natural functor $
\mathcal{T}: \mathcal{P}_{\mathcal{A}(X)}\to \text{Tw}(X, \mathcal{P}_{\mathcal{A}}, U_i)
$ as well as its right adjoint $\mathcal{S}$. In Section \ref{section: the gluing of underlying complexes} we temporarily ignore the $\bar{\partial}$-$\mathbb{Z}$-connection and focus on the gluing of underlying complexes. In Section \ref{section: the descent of cohesive modules} we take the $\bar{\partial}$-$\mathbb{Z}$-connection back and consider the descent of cohesive modules, where the main result of this paper is proved (Theorem \ref{thm: cohesive modules satisfies descent for open covers}). 

\section*{Acknowledgment}
The author wants to thank Jonathan Block for numerous discussions. The author also wants to thank Valery Lunts, Olaf Schn\"{u}rer, and Julian Holstein for helpful comments. Part of this paper was finished when the author was visiting the Institut des Hautes \'{E}tudes Scientifiques France in January 2018, and he wants to thank IHES for its hospitality and its ideal working environment.

\section{A review of cohesive modules}\label{section: review of cohesive modules}
\subsection{Definition and basic facts}
Let us first recall the definition of the cohesive module in \cite{block2010duality}
\begin{defi}\label{definition of cohesive module}[\cite{block2010duality} Definition 2.4]
For a curved dg-algebra $\mathcal{A} = (\mathcal{A}^{\bullet}, \diff_{\mathcal{A}}, c)$, we define the dg-category $\mathcal{P}_{\mathcal{A}}$:
\begin{enumerate}
\item
An object $E=(E^{\bullet}, \mathbb{E})$ in $\mathcal{P}_{\mathcal{A}}$, which we call a \emph{cohesive module}, is a $\mathbb{Z}$-graded (but bounded in both directions) right module $E^{\bullet}$ over $\mathcal{A}^0$, ($\mathcal{A}^0$ is the zero degree part of $\mathcal{A}^{\bullet}$) which is finitely generated and projective, together with a $\mathbb{Z}$-connection, which satisfies the usual Leibniz condition
$$
\mathbb{E}(e\cdot \omega)= \mathbb{E}(e)\cdot \omega+(-1)^{|e|}e\cdot \diff_{\mathcal{A}}(\omega)
$$
   $$
    \mathbb{E}: E^{\bullet}\otimes_{\mathcal{A}^0} \mathcal{A}^{\bullet} \rightarrow E^{\bullet}\otimes_{\mathcal{A}^0} \mathcal{A}^{\bullet}
    $$
    that satisfies the integrability condition that the relative curvature vanishes
    $$
    F_E(e)= \mathbb{E}\circ\mathbb{E}(e)+e\cdot c=0
    $$
    for all $e\in E^{\bullet}$.
\item The morphisms of degree $k$, $\mathcal{P}^k_{\mathcal{A}}(E_1,E_2)$ between two cohesive modules $E_1=(E_1^{\bullet}, \mathbb{E}_1)$ and $E_2=(E_2^{\bullet}, \mathbb{E}_2)$ are
    $$
    \{\phi: E^{\bullet}_1\otimes_{\mathcal{A}^0} \mathcal{A}^{\bullet} \rightarrow E^{\bullet}_2\otimes_{\mathcal{A}^0} \mathcal{A}^{\bullet}\text{ of degree } k \text{ and } \phi(ea)=\phi(e)a,~ \forall ~a\in \mathcal{A}^{\bullet} \}
    $$
    with differentials defined by
    $$
    \diff(\phi)(e)=\mathbb{E}_2(\phi(e))-(-1)^{|\phi|}
    $$
\end{enumerate}
\end{defi}

The differential $\mathbb{E}$ decomposes into $
\mathbb{E}=\sum \mathbb{E}^i$
where
$$
\mathbb{E}^i: E^{\bullet}\rightarrow E^{\bullet+1-i}\otimes_{\mathcal{A}}\mathcal{A}^i.
$$
A similar decomposition applies to the morphism $\phi$.

As  in \cite{block2010duality} Section 2.4 we could define the shift functor and the mapping cone in $\mathcal{P}_{\mathcal{A}}$.
\begin{defi}\label{defi: shift and mapping cone in PA}
For $E=(E^{\bullet},\mathbb{E})$, set $E[1]=(E[1]^{\bullet},\mathbb{E}[1])$, where $E[1]^{\bullet}=E^{\bullet+1}$ and $\mathbb{E}[1])=-\mathbb{E}$. Next for $\phi: E\to F$ a degree zero closed morphism in $\mathcal{P}_{\mathcal{A}}$, we define the mapping cone of $\phi$, $\text{Cone}(\phi)=(\text{Cone}(\phi)^{\bullet},\mathbb{C}_{\phi})$ by
$$
\text{Cone}(\phi)^{\bullet}=\begin{pmatrix}
  F^{\bullet} \\
  \oplus\\
  E[1]^{\bullet}
 \end{pmatrix}
$$
and
$$
\mathbb{C}_{\phi}=\begin{pmatrix}
  \mathbb{F}& \phi \\

0&   \mathbb{E}[1]
 \end{pmatrix}.
$$
\end{defi}

It is clear that $\mathcal{P}_{\mathcal{A}}$ is a pre-triangulated dg-category hence its homotopy category $\text{Ho}(\mathcal{P}_{\mathcal{A}})$ is a triangulated category.

In this paper we also consider the degree zero part $\mathcal{A}^0$ as a dg-algebra concentrated at degree zero with zero differential. In this case $\mathcal{P}_{\mathcal{A}^0}$ is simply the dg-category of bounded complexes of finitely generated projective  $\mathcal{A}^0$-modules. Let $\For: \mathcal{P}_{\mathcal{A}}\to \mathcal{P}_{\mathcal{A}^0}$ be the forgetful functor. It is clear that
$$
\For(E^{\bullet},\mathbb{E})=(E^{\bullet},\mathbb{E}^0).
$$

It is also useful to consider the larger dg-category of quasi-cohesive modules $\mathcal{C}_{\mathcal{A}}$: a quasi-cohesive module is a data $(\mathcal{Q}^{\bullet},\mathbb{Q})$ where everything is the same as in a cohesive module except that $\mathcal{X}^{\bullet}$ is not required to be bounded, finitely generated and projective. Similarly we have $\mathcal{C}_{\mathcal{A}^0}$.

We have the following definition of quasi-equivalence between quasi-cohesive modules.

\begin{defi}\label{defi: quasi-isomorphism between quasi-cohesive modules}
A degree zero closed morphism $\phi\in \mathcal{C}_{\mathcal{A}}(E_1,E_2)$ is called a \emph{quasi-isomorphism} if and only if $\For(\phi):\For(E_1)\to \For(E_2)$ is a quasi-isomorphism of complexes of $\mathcal{A}^0$-modules.
\end{defi}

The following result characterize homotopy equivalences in $\mathcal{P}_{\mathcal{A}}$.

\begin{prop}\label{prop: homotopy equivalences in PA}[\cite{block2010duality} Proposition 2.9]
Let $E_1$ and $E_2$ be objects in $\mathcal{P}_{\mathcal{A}}$. Then a degree zero closed morphism $\phi\in \mathcal{P}_{\mathcal{A}}(E_1,E_2)$ is a homotopy equivalence if and only if $\phi$ is a quasi-isomorphism in the sense of Definition \ref{defi: quasi-isomorphism between quasi-cohesive modules}, i.e. $\For(\phi):\For(E_1)\to \For(E_2)$ is a quasi-isomorphism of complexes of $\mathcal{A}^0$-modules.
\end{prop}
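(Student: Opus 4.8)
The plan is to reduce the statement to the well-known characterization of homotopy equivalences in the dg-category of bounded complexes of finitely generated projective modules, and then to transport that characterization through the forgetful functor $\For$. Recall that $\For$ is faithful and is compatible with the pre-triangulated structure: $\For$ sends the mapping cone $\mathrm{Cone}(\phi)$ of Definition \ref{defi: shift and mapping cone in PA} to the ordinary mapping cone of $\For(\phi)$, because the $(1,1)$-entry $\mathbb{F}$ of $\mathbb{C}_\phi$ contributes only its degree-zero component and $\phi$ is a degree-zero closed morphism whose degree-zero component $\phi^0$ is a chain map of complexes over $\mathcal{A}^0$. So the first step is to record that $\For(\mathrm{Cone}(\phi)) \cong \mathrm{Cone}(\For(\phi))$ as objects of $\mathcal{P}_{\mathcal{A}^0}$.

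Next I would use the standard fact that, in any pre-triangulated dg-category, a degree-zero closed morphism $\phi$ is a homotopy equivalence if and only if its mapping cone $\mathrm{Cone}(\phi)$ is contractible (homotopy equivalent to the zero object). This holds both in $\mathcal{P}_{\mathcal{A}}$ and in $\mathcal{P}_{\mathcal{A}^0}$. Thus the claim becomes: $\mathrm{Cone}(\phi)$ is contractible in $\mathcal{P}_{\mathcal{A}}$ if and only if $\For(\mathrm{Cone}(\phi)) = \mathrm{Cone}(\For(\phi))$ is acyclic, i.e. has vanishing cohomology as a complex of $\mathcal{A}^0$-modules. The ``only if'' direction is immediate since $\For$ preserves contractibility. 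For the ``if'' direction, set $C = \mathrm{Cone}(\phi) = (C^{\bullet}, \mathbb{C})$ and suppose $(C^{\bullet}, \mathbb{C}^0)$ is acyclic. Since $C^{\bullet}$ is bounded and each $C^i$ is finitely generated and projective over $\mathcal{A}^0$, an acyclic bounded complex of finitely generated projectives is contractible: one builds, by descending induction on the degree, a contracting homotopy $s^0 : C^{\bullet} \to C^{\bullet - 1}$ with $\mathbb{C}^0 s^0 + s^0 \mathbb{C}^0 = \mathrm{id}$, using projectivity to split the surjections onto the cycles.

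The heart of the proof is to promote this degree-zero contracting homotopy $s^0$ to a full $\mathbb{Z}$-connection-compatible contracting homotopy $s = \sum_i s^i$ with $s^i : C^{\bullet} \to C^{\bullet-1-i} \otimes_{\mathcal{A}^0} \mathcal{A}^i$, satisfying $\mathbb{C} s + s \mathbb{C} = \mathrm{id}$ in $\mathcal{P}_{\mathcal{A}}(C,C)$. The mechanism is a filtration/homological-perturbation argument: the morphism $h := \mathrm{id} - (\mathbb{C} s^0 + s^0 \mathbb{C})$, computed inside $\mathcal{P}_{\mathcal{A}}(C,C)$, has vanishing component in $\mathcal{A}^0$ (that is exactly what $s^0$ achieves), hence is ``strictly lower'' with respect to the filtration by form-degree. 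Because $\mathcal{A}^{\bullet}$ is non-negatively graded and $C^{\bullet}$ is bounded, this operator is nilpotent, so $\mathrm{id} - h$ is invertible with inverse $\sum_{k \geq 0} h^k$ (a finite sum on each fixed bidegree). Setting $s := s^0 (\mathrm{id} - h)^{-1}$, or equivalently running the perturbation lemma degree by degree in $i$, produces the desired $s$ with $\mathbb{C} s + s \mathbb{C} = \mathrm{id}$; one checks $s$ is $\mathcal{A}^{\bullet}$-linear because $s^0$ and $\mathbb{C}$ are. This is essentially the argument behind Proposition \ref{prop: homotopy equivalences in PA} as in \cite{block2010duality}, and the main obstacle is organizing the nilpotency/convergence bookkeeping cleanly — verifying that on each bidegree only finitely many terms of the geometric series contribute, and that the resulting $s$ genuinely lands in the morphism space $\mathcal{P}_{\mathcal{A}}(C,C)$ rather than some completion. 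Once $C$ is contractible in $\mathcal{P}_{\mathcal{A}}$, $\phi$ is a homotopy equivalence, completing the ``if'' direction and hence the proposition.
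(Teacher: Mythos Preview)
Your argument is correct and is precisely the spectral-sequence/perturbation argument that underlies the cited Proposition 2.9 of \cite{block2010duality}; the paper itself does not reprove the statement but simply defers to that reference, so your write-up is in fact more detailed than what appears here. One small point worth making explicit in your final write-up: the formula $s := s^0(\mathrm{id}-h)^{-1}$ really does satisfy $\mathbb{C}s + s\mathbb{C} = \mathrm{id}$ because $h = \mathrm{id} - d(s^0)$ is $d$-closed, hence $\mathbb{C}h = h\mathbb{C}$, and therefore $\mathbb{C}$ commutes with $(\mathrm{id}-h)^{-1}$; once you note this the computation is a one-liner, and the nilpotency you already argued (from boundedness of $C^\bullet$) guarantees the geometric series terminates.
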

\begin{proof}
See \cite{block2010duality} Proposition 2.9.
\end{proof}

\begin{rmk}
The result in Proposition \ref{prop: homotopy equivalences in PA} is not true if one of $E_1$ and $E_2$ is not in $\mathcal{P}_{\mathcal{A}}$.
\end{rmk}

For $\mathcal{P}_{\mathcal{A}}$ the fully faithful Yoneda embedding $h: Z^0(\mathcal{P}_{\mathcal{A}})\to \Modb\mathcal{P}_{\mathcal{A}}$ is given by
$$
E\mapsto h_E=\mathcal{P}_{\mathcal{A}}(\cdot,E).
$$
Similarly for $\mathcal{C}_{\mathcal{A}}$ we have a fully faithful functor $\tilde{h}: Z^0(\mathcal{C}_{\mathcal{A}})\to \Modb\mathcal{P}_{\mathcal{A}}$ given by
$$
Q\mapsto \tilde{h}_Q=\mathcal{C}_{\mathcal{A}}(\cdot,Q)
$$
considered as a module over $\mathcal{P}_{\mathcal{A}}$.

We recall the following result, which justifies the name "quasi-isomorphism" in Definition \ref{defi: quasi-isomorphism between quasi-cohesive modules}.

\begin{prop}\label{prop: quasi-isomorphism of quasi-cohesive modules induces quasi-isomorphism for Yoneda embedding}[\cite{block2010duality} Proposition 3.9]
Let $\phi: Q_1\to Q_2$ be a quasi-isomorphism in $\mathcal{C}_{\mathcal{A}}$ as in Definition \ref{defi: quasi-isomorphism between quasi-cohesive modules}. Then the induced morphism $
\tilde{h}_{\phi}: \tilde{h}_{Q_1}\to \tilde{h}_{Q_2}$
is a quasi-isomorphism in $\Modb \mathcal{P}_{\mathcal{A}}$. The inverse is not true.
\end{prop}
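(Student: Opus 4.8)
The plan is to reduce the assertion to an objectwise quasi-isomorphism and then to a spectral-sequence comparison. By definition a morphism in $\Modb\mathcal{P}_{\mathcal{A}}$ is a quasi-isomorphism precisely when it becomes one after evaluation at every object of $\mathcal{P}_{\mathcal{A}}$, so it suffices to show that for each cohesive module $P=(P^{\bullet},\mathbb{P})$ the chain map
\[
\mathcal{C}_{\mathcal{A}}(P,\phi)\colon\ \mathcal{C}_{\mathcal{A}}(P,Q_1)\longrightarrow\mathcal{C}_{\mathcal{A}}(P,Q_2),\qquad \psi\longmapsto\phi\circ\psi,
\]
is a quasi-isomorphism of complexes of vector spaces. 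First I would record the standard identification: an $\mathcal{A}^{\bullet}$-linear map $P^{\bullet}\otimes_{\mathcal{A}^0}\mathcal{A}^{\bullet}\to Q^{\bullet}\otimes_{\mathcal{A}^0}\mathcal{A}^{\bullet}$ is determined by its restriction to $P^{\bullet}\otimes 1$, so that $\mathcal{C}_{\mathcal{A}}(P,Q)\cong\mathrm{Hom}^{\bullet}_{\mathcal{A}^0}(P^{\bullet},Q^{\bullet}\otimes_{\mathcal{A}^0}\mathcal{A}^{\bullet})$ as graded vector spaces, with differential built from $\mathbb{P}$ and $\mathbb{Q}$ as in Definition \ref{definition of cohesive module}.

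Next I would filter $\mathcal{C}_{\mathcal{A}}(P,Q)$ by the $\mathcal{A}$-degree of the target, $F^{p}=\{\psi:\psi^{i}=0\text{ for }i<p\}$. Since every component $\mathbb{E}^{i}$ raises the $\mathcal{A}$-degree by $i\ge 0$, the differential is compatible with $F^{\bullet}$; and because $P^{\bullet}$ is bounded and $\mathcal{A}^{0,\bullet}$ is bounded (the manifold has finite dimension), this filtration is finite in each total degree, so the associated spectral sequence converges and it is enough to check that $\phi$ induces a quasi-isomorphism on associated graded complexes. A direct computation, using that the curvature $c$ has no degree-zero component (so $\mathbb{E}^{0}$ is a genuine $\mathcal{A}^0$-linear differential and $\For(E)=(E^{\bullet},\mathbb{E}^{0})$ is a complex of $\mathcal{A}^0$-modules), identifies
\[
\mathrm{gr}^{p}\,\mathcal{C}_{\mathcal{A}}(P,Q)\ \cong\ \mathrm{Hom}^{\bullet-p}_{\mathcal{A}^0}\!\bigl(\For(P),\,\For(Q)\otimes_{\mathcal{A}^0}\mathcal{A}^{p}\bigr),
\]
with the usual Hom-complex differential (here $\mathcal{A}^{p}$ carries no differential). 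At this point the hypothesis enters: by assumption $\For(\phi)\colon\For(Q_1)\to\For(Q_2)$ is a quasi-isomorphism of complexes of $\mathcal{A}^0$-modules; since $\mathcal{A}^{p}$ is a flat $\mathcal{A}^0$-module — indeed finitely generated projective, being the module of smooth $(0,p)$-forms over the smooth functions (Swan's theorem on the compact $X$, and the analogue for sections of a vector bundle on the open sets $U$) — the map $\For(\phi)\otimes_{\mathcal{A}^0}\mathcal{A}^{p}$ is again a quasi-isomorphism; and since $P^{\bullet}$ is a bounded complex of finitely generated projective $\mathcal{A}^0$-modules, $\mathrm{Hom}^{\bullet}_{\mathcal{A}^0}(P^{\bullet},-)$ is exact and preserves quasi-isomorphisms. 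Hence $\mathrm{gr}^{p}(\phi)$ is a quasi-isomorphism for every $p$, and the finite-filtration comparison lemma shows that $\mathcal{C}_{\mathcal{A}}(P,\phi)$ is a quasi-isomorphism for every $P$; equivalently, $\tilde h_{\phi}$ is a quasi-isomorphism in $\Modb\mathcal{P}_{\mathcal{A}}$.

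For the last sentence I would exhibit an explicit $\phi$ with $\tilde h_{\phi}$ a quasi-isomorphism but $\For(\phi)$ not one: for instance a nonzero quasi-cohesive module $Q$ whose underlying complex $\For(Q)$ has nonvanishing cohomology but for which $(Q^{\bullet}\otimes_{\mathcal{A}^0}\mathcal{A}^{\bullet},\mathbb{Q})$ is acyclic, so that $0\to Q$ becomes a quasi-isomorphism after applying $\tilde h$ (compare \cite{block2010duality}). The argument is essentially formal once the $\mathcal{A}$-degree filtration is set up, so the ``hard part'' is not a single computation but the bookkeeping: verifying the identification of the associated graded above, and, above all, isolating the two exactness inputs — flatness of $\mathcal{A}^{\bullet}$ over $\mathcal{A}^0$, which is the only place where the geometry enters, and exactness of $\mathrm{Hom}_{\mathcal{A}^0}(P^{\bullet},-)$ coming from $P$ being \emph{cohesive} rather than merely quasi-cohesive (this is exactly why the Yoneda embedding targets $\Modb\mathcal{P}_{\mathcal{A}}$ and why the analogous statement over $\Modb\mathcal{C}_{\mathcal{A}}$ fails). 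Finding a clean counterexample for the converse is the other point that needs a little thought.
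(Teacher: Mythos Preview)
The paper itself does not supply a proof of this proposition; it simply records the statement and defers to \cite{block2010duality}, Proposition~3.9. Your argument --- filtering $\mathcal{C}_{\mathcal{A}}(P,Q)$ by $\mathcal{A}$-degree, identifying the associated graded with $\mathrm{Hom}^{\bullet}_{\mathcal{A}^0}(\For(P),\For(Q)\otimes_{\mathcal{A}^0}\mathcal{A}^{p})$, and then invoking flatness of $\mathcal{A}^{\bullet}$ over $\mathcal{A}^0$ together with projectivity and boundedness of $P^{\bullet}$ --- is precisely the standard spectral-sequence proof, and is what the cited reference does; your proposal is correct and matches the intended approach.
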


\subsection{Pullback and pushforward}\label{subsection: pullback and pushforward}
Next we consider the pullback and pushforward of (quasi-)cohesive modules. For simplicity we focus on dg-algebras instead of curved dg-algebras. Let $f: \mathcal{A}\to \mathcal{B}$ be a morphism between  dg-algebras. We define a dg functor $f^*: \mathcal{P}_{\mathcal{A}} \to  \mathcal{P}_{\mathcal{B}}$ as follows. Given $(E^{\bullet},\mathbb{E})$ a cohesive module over $\mathcal{A}$, define $f^*(E)$ to be $
(E^{\bullet}\otimes_{\mathcal{A}^0}\mathcal{B}^0,\mathbb{E}_{\mathcal{B}})$
where
$$
\mathbb{E}_{\mathcal{B}}(e\otimes b)=\mathbb{E}(e)b+(-1)^{|e|}e\otimes d_{\mathcal{B}}(b).
$$
We could check that $\mathbb{E}_{\mathcal{B}}$ is still a $\mathbb{Z}$-connection and satisfies $\mathbb{E}_{\mathcal{B}}^2=0$.  The functor $f^*$ on morphisms is defined in the same way. We call $f^*$ the \emph{pullback functor}.

We could define $f^*: \mathcal{C}_{\mathcal{A}} \to  \mathcal{C}_{\mathcal{B}}$ in the same way. Moreover, given composable morphisms of dg-algebras $f$ and $g$, there is a natural equivalence $(f\circ g)^*\Rightarrow g^*f^*$ which satisfies the obvious coherence relation.

For $f: \mathcal{A}\to \mathcal{B}$  a morphism between  dg-algebras, there is also a functor in the opposite direction $f_*:\Modb \mathcal{P}_{\mathcal{B}}\to \Modb\mathcal{P}_{\mathcal{A}}$ and $f_*:\Modb \mathcal{C}_{\mathcal{B}}\to \Modb\mathcal{C}_{\mathcal{A}}$ defined by composing with $f^*$. Now suppose that we are in the special case that the natural map
\begin{equation}\label{equation: tensor of dg-algebras}
\mathcal{B}^0\otimes_{\mathcal{A}^0}\mathcal{A}^{\bullet}\to \mathcal{B}^{\bullet}
\end{equation}
is an isomorphism. Then we will define a  pushforward functor $f_*: \mathcal{C}_{\mathcal{B}}\to  \mathcal{C}_{\mathcal{A}}$ as follows. Let $(\mathcal{Q}^{\bullet},\mathbb{Q})$ be a quasi-cohesive $\mathcal{B}$-module. We consider $\mathcal{Q}^{\bullet}$ as a graded $\mathcal{A}^0$-module via $f$. By the assumption there is an isomorphism
$$
\mathcal{Q}^{\bullet}\otimes_{\mathcal{A}^0}\mathcal{A}^{\bullet}\cong \mathcal{Q}^{\bullet}\otimes_{\mathcal{B}^0}\mathcal{B}^{\bullet}.
$$
Then we define $f_*(\mathcal{Q}^{\bullet},\mathbb{Q})=(\mathcal{Q}^{\bullet},\mathbb{Q})$ where the right hand side are the same as the left hand side but considered as graded $\mathcal{A}^0$-modules and $\mathcal{A}$-module maps. We call $f_*$ the \emph{pushforward functor}.

It is easy to check that $f_*: \mathcal{C}_{\mathcal{B}}\to  \mathcal{C}_{\mathcal{A}}$ and $f_*:\Modb \mathcal{C}_{\mathcal{B}}\to \Modb\mathcal{C}_{\mathcal{A}}$ are compatible via the Yoneda embedding. Moreover, both $f^*$ and $f_*$ are compatible with the forgetful functor $\For: \mathcal{C}_{\mathcal{A}}\to \mathcal{C}_{\mathcal{A}^0}$.

\begin{rmk}\label{rmk: pushforward does not restrict to cohesive modules}
In general, for $(\mathcal{E}^{\bullet},\mathbb{E})$ in $\mathcal{P}_{\mathcal{B}}$, its pushforward $f_*(\mathcal{E}^{\bullet},\mathbb{E})$ is not in  $\mathcal{P}_{\mathcal{A}}$.
\end{rmk}

\subsection{The Serre-Swan theorem}
Let $X$ be a $C^{\infty}$-manifold which is not necessarily compact. Let $C^{\infty}(X)$ be the ring of complex-value $C^{\infty}$-functions on $X$. For a $C^{\infty}$ complex vector bundle $E$ on $X$, let $\Gamma(E)$ be the set of $C^{\infty}$-sections of $E$. It is clear that $\Gamma(E)$  is a $C^{\infty}(X)$-module.

We recall the following result.

\begin{thm}\label{thm: Serre-Swan}[Serre-Swan theorem, \cite{nestruev2020smooth} Theorem 12.29 and 12.32]
Let $X$, $C^{\infty}(X)$, and $\Gamma$ as before. Then $\Gamma$ gives an equivalence of categories from the category of finite dimensional $C^{\infty}$ complex vector bundles on $X$ to the category of 
finitely generated projective $C^{\infty}(X)$-modules.
\end{thm}

We have the following corollary.

\begin{coro}\label{coro: summand of a trivial bundle}
Let $X$ be as before. For any finite dimensional $C^{\infty}$ complex vector bundle $E$ on $X$, there exists a trivial finite dimensional $C^{\infty}$ complex vector bundle $T$ on $X$ such that  $E$ is a $C^{\infty}$-subbundle and hence a direct summand of $T$. Moreover, the rank of $T$ is no greater than $N$, which is an integer depending only on the dimension of $X$ and the rank of $E$.
\end{coro}
\begin{proof}It is obvious by Theorem \ref{thm: Serre-Swan}. The upper bound of the rank of $T$ is given by \cite[Corollary 12.28]{nestruev2020smooth}.
\end{proof}

We will also need the following result.

\begin{prop}\label{prop: kernel of surjective bundle map}
Let $X$ be as before. Let $E$ and $F$ finite dimensional $C^{\infty}$ complex vector bundles on $X$.  For a surjective vector bundle morphism $\phi: E\to F$, its kernel $\ker\phi$ is a $C^{\infty}$-subbundle and hence a direct summand of $E$.
\end{prop}

In this paper we will use the terminologies \emph{finite dimensional $C^{\infty}$ complex vector bundles}  and \emph{finitely generated projective $C^{\infty}(X)$-modules} interchangeably.

\subsection{Cohesive modules on complex manifolds}
Let $X$ be a complex manifold, in this paper we consider the Dolbeault dg-algebra 
$$
\mathcal{A}(X)=(\mathcal{A}^{0,\bullet}(X),\bar{\partial}_X,0)
$$
 where $\mathcal{A}^{0,\bullet}(X)$ is the set of $C^{\infty}$-$(0,\bullet)$-forms on $X$. We have the dg-category of cohesive modules $\mathcal{P}_{\mathcal{A}(X)}$. Let $E=(\mathcal{E}^{\bullet}, \mathbb{E})$ be an object in $\mathcal{P}_{\mathcal{A}(X)}$ where $\mathcal{E}^{\bullet}$ is a bounded graded finitely generated projective $\mathcal{A}^{0,0}(X)=C^{\infty}(X)$-module. By Theorem \ref{thm: Serre-Swan}, $\mathcal{E}^{\bullet}$ corresponds to a bounded graded finite dimensional $C^{\infty}$ vector bundle on $X$. In this viewpoint, $\mathbb{E}$ is a $\bar{\partial}$-$\mathbb{Z}$-connection on the graded vector bundle $\mathcal{E}^{\bullet}$.

In the compact case we have the following theorem.

\begin{thm}\label{thm: cohesive modules give dg-enhancement of coherent sheaves}[\cite{block2010duality} Theorem 4.3]
Let $X$ be a compact complex manifold and  $\mathcal{A}(X)=(\mathcal{A}^{0,\bullet}(X),\bar{\partial}_X,0)$ be the Dolbeault dg-algebra. Then the homotopy category $\text{Ho}(\mathcal{P}_{\mathcal{A}(X)})$ is equivalent to $D^b_{\coh}(X)$, the bounded derived category of complexes of coherent $\mathcal{O}_X$-modules, where $\mathcal{O}_X$ is the sheaf of holomorphic functions on $X$.
\end{thm}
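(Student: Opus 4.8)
The plan is to construct an exact functor $\Phi\colon \text{Ho}(\mathcal{P}_{\mathcal{A}(X)})\to D^b_{\text{coh}}(X)$, induced by a functor already at the level of the complexes underlying cohesive modules, and to prove it is an equivalence of triangulated categories by checking three things: (i) $\Phi$ really lands in $D^b_{\text{coh}}(X)$, i.e.\ the complex of sheaves attached to a cohesive module has coherent cohomology; (ii) $\Phi$ is fully faithful; and (iii) $\Phi$ is essentially surjective. Since $\mathcal{P}_{\mathcal{A}(X)}$ is pre-triangulated and $\Phi$ is exact, step (iii) reduces to realizing a generating set of $D^b_{\text{coh}}(X)$ in the image of $\Phi$, and for that it is enough to realize every single coherent sheaf.

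To construct $\Phi$, given $E=(\mathcal{E}^{\bullet},\mathbb{E})$ I would pass from the finitely generated projective $C^{\infty}(X)$-module $\mathcal{E}^{\bullet}$ to the corresponding graded smooth vector bundle (Serre--Swan) and form the bounded complex of sheaves $(\mathcal{E}^{\bullet}\otimes_{\mathcal{A}^{0,0}_X}\mathcal{A}^{0,\bullet}_X,\mathbb{E})$, whose terms are soft sheaves. The key local statement is that on an open set biholomorphic to a polydisc, the integrability condition $\mathbb{E}\circ\mathbb{E}=0$ together with the Dolbeault lemma with coefficients allows one to conjugate $E$ by an automorphism of the form $\text{id}+(\text{terms of positive form degree})$ so that $\mathbb{E}^{\geq 2}=0$, the remaining $\mathbb{E}^1$ is an integrable $\bar{\partial}$-operator, and $\mathbb{E}^0$ is holomorphic; Koszul--Malgrange then identifies the complex above with the Dolbeault resolution of a bounded complex of holomorphic vector bundles. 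Hence its cohomology sheaves are coherent analytic sheaves, their $\mathcal{O}_X$-module structures (read off locally) glue, and $\Phi$ is well defined; it is visibly functorial and sends mapping cones to mapping cones, so it descends to an exact functor on homotopy categories.

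For full faithfulness I would identify the morphism complex $\mathcal{P}_{\mathcal{A}(X)}(E,F)$ with the complex of global sections of the bounded complex of \emph{soft} sheaves $\mathcal{H}om\big((\mathcal{E}^{\bullet}\otimes\mathcal{A}^{0,\bullet}_X,\mathbb{E}),(\mathcal{F}^{\bullet}\otimes\mathcal{A}^{0,\bullet}_X,\mathbb{E})\big)$. Since softness implies acyclicity for $\Gamma(X,-)$ (this is where the soft-sheaf generalities recalled in the appendix enter), the cohomology of this complex is the hypercohomology of that sheaf-$\mathcal{H}om$ complex; and the local normal form above, together with the standard fact that the Dolbeault complex resolves the holomorphic sections, shows this sheaf-$\mathcal{H}om$ complex represents $R\mathcal{H}om_{\mathcal{O}_X}(\Phi E,\Phi F)$. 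Thus $H^k\big(\mathcal{P}_{\mathcal{A}(X)}(E,F)\big)\cong \text{Hom}_{D^b_{\text{coh}}(X)}(\Phi E,\Phi F[k])$ for all $k$, which gives full faithfulness and compatibility with the triangulated structure. Compactness of $X$ is used here to keep these groups finite dimensional and to control the relevant hypercohomology spectral sequence.

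It remains to prove essential surjectivity. Cover the compact manifold $X$ by finitely many open sets $V_\alpha$ biholomorphic to polydiscs; after shrinking the cover if necessary, each restriction $\mathcal{F}|_{V_\alpha}$ of a given coherent sheaf $\mathcal{F}$ admits a finite resolution by free $\mathcal{O}_{V_\alpha}$-modules, i.e.\ a bounded complex of holomorphic vector bundles, which is a cohesive module $E_\alpha$ over $\mathcal{A}(V_\alpha)$ with $\Phi(E_\alpha)\simeq \mathcal{F}|_{V_\alpha}$. On overlaps the $E_\alpha$ are homotopy equivalent, and choosing a partition of unity subordinate to $\{V_\alpha\}$ I would assemble the chosen homotopy equivalences together with all their higher homotopies into the positive-form-degree components $\mathbb{E}^{\geq 2}$ of a single $\bar{\partial}$-$\mathbb{Z}$-connection on the bundle $\bigoplus_\alpha(\text{underlying bundle of }E_\alpha)\cdot(\text{partition function})$, producing a global cohesive module $E_{\mathcal{F}}$ with $\Phi(E_{\mathcal{F}})\cong\mathcal{F}$. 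This patching step is the main obstacle: checking that the glued $\mathbb{Z}$-connection satisfies the integrability condition $F_{E_{\mathcal{F}}}=0$ is a Maurer--Cartan / homotopy-coherence problem, and organizing the higher components explicitly so that the relevant twisting cochain squares to zero is the most delicate part of the argument. Establishing coherence of the cohomology sheaves in step (i)---equivalently, proving the local normal form---is a close second.
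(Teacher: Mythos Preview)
The paper does not prove this theorem at all: it is stated with the attribution ``[\cite{block2010duality} Theorem 4.3]'' and used as a known input, with no proof or sketch given in the present paper. So there is nothing in the paper to compare your proposal against.

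That said, your outline is essentially the strategy of Block's original proof in \cite{block2010duality}: one defines $\Phi(E)$ as the Dolbeault-type complex $(\mathcal{E}^\bullet\otimes_{\mathcal{A}^{0,0}_X}\mathcal{A}^{0,\bullet}_X,\mathbb{E})$, uses a local gauge-fixing (via the $\bar{\partial}$-Poincar\'e lemma and Koszul--Malgrange) to reduce $E$ locally to a bounded complex of holomorphic vector bundles, gets full faithfulness from softness/acyclicity of the Hom-complex, and for essential surjectivity resolves a coherent sheaf locally by free $\mathcal{O}$-modules and glues. One correction on the last step: in Block's argument the essential surjectivity does \emph{not} proceed by hand-building a global $\mathbb{Z}$-connection on a direct sum over a cover via a partition of unity as you describe; rather it invokes a theorem of Pali (and the Illusie/SGA~6 characterization of $D^b_{\text{coh}}$ by perfect complexes) to produce, for each coherent sheaf, a global bounded complex of smooth vector bundles with a $\bar{\partial}$-superconnection whose Dolbeault complex resolves the sheaf. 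Your ad hoc gluing would face exactly the Maurer--Cartan obstruction you flag, and in fact the present paper's whole point (twisted complexes and the descent theorem) is the systematic machinery that makes such gluing rigorous; Block's original proof sidesteps it by citing Pali. Also, compactness is not used to make Hom-groups finite dimensional (that plays no role in the equivalence), but rather to guarantee finite global resolutions in the essential-surjectivity step.
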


\begin{rmk}
It is well-known that in this case $D^b_{\coh}(X)$ is also equivalent to $D_{\perf}(X)$, the derived category of perfect complexes of  $\mathcal{O}_X$-modules.
\end{rmk}

Recall that a perfect complex of  $\mathcal{O}_X$-modules is a complex $\mathcal{E}^{\bullet}$ of $\mathcal{O}_X$-modules such that there exists an open covering $U_i$ of $X$ such that on each $U_i$ there exists a bounded complex of finitely generated locally free $\mathcal{O}_X$-modules $\mathcal{F}^{\bullet}_i$ together with a quasi-isomorphism $\mathcal{F}^{\bullet}_i\to \mathcal{E}^{\bullet}|_{U_i}$.

When $X$ is non-compact, we need the following concept.

\begin{defi}\label{defi: globally bounded perfect complex}[\cite{chuang2021maurer} Definition 7.2]
Let $X$ be a complex manifold which is not necessarily compact. Let $\mathcal{O}_X$ be the sheaf of holomorphic functions on $X$. A complex  of $\mathcal{O}_X$-modules  $\mathcal{E}^{\bullet}$ is called a \emph{globally bounded perfect complex} if there exists an open covering $U_i$ of $X$ and integers $a<b$ and $N>0$ such that on each $U_i$ there exists a bounded complex of finitely generated locally free $\mathcal{O}_X$-modules $\mathcal{F}^{\bullet}_i$ which is concentrated in degrees $[a, b]$ and each $\mathcal{F}^j_i$ has rank $\leq N$, together with  a quasi-isomorphism $\mathcal{F}^{\bullet}_i\to \mathcal{E}^{\bullet}|_{U_i}$. We denote the derived category of globally bounded perfect complexes on $X$ by $D^{\text{B}}_{\perf}(X)$.
\end{defi}

It is clear that when $X$ is compact, any perfect complex on $X$ is globally bounded. However, this is no longer true for non-compact $X$. See \cite[Remark 7.2]{chuang2021maurer}.

\begin{thm}\label{thm: cohesive modules give dg-enhancement of coherent sheaves non-compact}[\cite{chuang2021maurer} Theorem 8.3]
Let $X$ be a  complex manifold and  $\mathcal{A}(X)=(\mathcal{A}^{0,\bullet}(X),\bar{\partial}_X,0)$ be the Dolbeault dg-algebra. Then the homotopy category $\text{Ho}(\mathcal{P}_{\mathcal{A}(X)})$ is equivalent to $D^{\text{B}}_{\perf}(X)$, the derived category of globally bounded perfect complexes on $X$.
\end{thm}

\subsection{The dg-presheaf $\mathcal{P}_{\mathcal{A}}$ and the descent problem}
Let $U\subset X$ be an open subset of $X$ and we define the dg-category of (quasi-)cohesive modules on $U$ as follows.
\begin{defi}\label{defi: cohesive modules on open subset}
Let $U\subset X$ be an open subset of $X$. We define the dg-algebra $\mathcal{A}(U)$ to be
$$
\mathcal{A}(U)=(\mathcal{A}^{0,\bullet}(U),\bar{\partial},0)
$$
Then we could define the dg-categories $\mathcal{P}_{\mathcal{A}(U)}$, $\mathcal{P}_{\mathcal{A}^0(U)}$, $\mathcal{C}_{\mathcal{A}(U)}$, and $\mathcal{C}_{\mathcal{A}^0(U)}$.
\end{defi}

For an inclusion $U\subset V$ of open subsets we have the restriction map $r: \mathcal{A}(V)\to\mathcal{A}(U)$. Hence we get the pullback functor $r^*: \mathcal{P}_{\mathcal{A}(V)}\to \mathcal{P}_{\mathcal{A}(U)}$ as in Section \ref{subsection: pullback and pushforward}. Therefore the assignment
$$
U\mapsto \mathcal{P}_{\mathcal{A}(U)}
$$
gives a dg-presheaf on $X$ and we denote it by $\mathcal{P}_{\mathcal{A}}$.

For an open cover $\mathcal{U}=\{U_i\}$ of $X$, its \emph{\v{C}ech nerve} is a simplicial space
$$
\begin{tikzcd}
\cdots  \arrow[yshift=1.2ex]{r}\arrow[yshift=0.4ex]{r}\arrow[yshift=-0.4ex]{r}\arrow[yshift=-1.2ex]{r}& \coprod U_i\cap U_j\cap U_k \arrow[yshift=1ex]{r}\arrow{r}\arrow[yshift=-1ex]{r}& \coprod U_i\cap U_j  \arrow[yshift=0.7ex]{r}\arrow[yshift=-0.7ex]{r}  &  \coprod U_i.
\end{tikzcd}
$$
and we consider the resulting cosimplicial diagram of dg-categories
\begin{equation}\label{equation: cosimplicial diagram of cohesive modules of an open cover}
\begin{tikzcd}
\prod \mathcal{P}_{\mathcal{A}}(U_i) \arrow[yshift=0.7ex]{r}\arrow[yshift=-0.7ex]{r}& \prod \mathcal{P}_{\mathcal{A}}(U_i\cap U_j) \arrow[yshift=1ex]{r}\arrow{r}\arrow[yshift=-1ex]{r}  &  \prod \mathcal{P}_{\mathcal{A}}(U_i\cap U_j\cap U_k)  \arrow[yshift=1.2ex]{r}\arrow[yshift=0.4ex]{r}\arrow[yshift=-0.4ex]{r}\arrow[yshift=-1.2ex]{r}&\cdots
\end{tikzcd}
\end{equation}

It is clear that the descent data of $\mathcal{P}_{\mathcal{A}}$ with respect to the open cover $\{U_i\}$ is given by the homotopy limit of Diagram (\ref{equation: cosimplicial diagram of cohesive modules of an open cover}) in $\text{DgCat}_{\text{DK}}$, the category of all dg-categories with the Dwyer-Kan model structure. In Section \ref{section: review of twisted complex} we will present this homotopy limit as the dg-category of twisted complexes. The main topic of this paper is to prove that $\mathcal{P}_{\mathcal{A}}(X)$ is quasi-equivalent to the homotopy limit of Diagram (\ref{equation: cosimplicial diagram of cohesive modules of an open cover}).

We will also need the following result on pushforward.

\begin{prop}\label{prop: pushforward for open subset}
For an inclusion $U\subset V$ of open subsets let $r: \mathcal{A}(V)\to\mathcal{A}(U)$ be the restriction map. Then we can define the pushforward functor 
$$
r_*: \mathcal{C}_{\mathcal{A}(U)}\to \mathcal{C}_{\mathcal{A}(V)}.
$$
\end{prop}
\begin{proof}
Recall that $\mathcal{A}^\bullet(U)$ is the set of $C^{\infty}$-$(0,\bullet)$-forms on $U$. From the definition of differential forms it is clear that the natural homomorphism
$$
\mathcal{A}^0(U)\otimes_{\mathcal{A}^0(V)}\mathcal{A}^\bullet(V)\to \mathcal{A}^\bullet(U)
$$
is an isomorphism. The claim of the proposition follows from the construction in Section \ref{subsection: pullback and pushforward}.
\end{proof}

\section{Twisted complexes}\label{section: review of twisted complex}
\subsection{Definition and basic facts}
Toledo and Tong \cite{toledo1978duality} introduced  \emph{twisted complexes} in the 1970's as a way to obtain global resolutions of perfect complexes of sheaves on a complex manifold. In 2015 Wei proved in \cite{wei2016twisted} that the dg-category of twisted perfect complexes give a dg-enhancement of the derived category of perfect complexes.

In this paper we give a slightly generalized definition of twisted complexes so that we could apply it in the descent problem of $\mathcal{P}_{\mathcal{A}}$. For reference of twisted complexes see \cite{o1981trace} Section 1 or \cite{wei2016twisted} Section 2.

Let $X$ be a  paracompact topological space and $\mathfrak{F}$ be a dg-presheaf on $X$. Let $\mathcal{U}=\{U_i\}$ be an locally finite open cover of $X$. Let $U_{i_0\ldots i_n}$ denote the intersection $U_{i_0}\cap\ldots \cap U_{i_n}$.

Let $\{E_i\}$ and $\{F_i\}$ be two collections of objects in $\mathfrak{F}(U_i)$ for each $U_i$.  We can consider the map
\begin{equation}\label{equation: map with bigrade between graded sheaves}
C^{\bullet}(\mathcal{U},\text{Hom}^{\bullet}(E,F))=\bigoplus_{p,q}C^p(\mathcal{U},\text{Mor}^q_{\mathfrak{F}}(E,F)).
\end{equation}
An element $u^{p,q}$ of $C^p(\mathcal{U},\text{Mor}^q_{\mathfrak{F}}(E,F))$ gives an element $u^{p,q}_{i_0\ldots i_p}$ of $\text{Mor}^q_{\mathfrak{F}}(E_{i_p},F_{i_0})$ over each non-empty intersection $U_{i_0\ldots i_n}$. Notice that we require $u^{p,q}$ to be a morphism from the $E$ on the last subscript of $U_{i_0\ldots i_n}$ to the $F$ on the first subscript of $U_{i_0\ldots i_n}$.

We need to define the compositions of $C^{\bullet}(\mathcal{U},\text{Mor}^{\bullet}_{\mathfrak{F}}(E,F))$. Let $\{G_i\}$ be a third collection of objects. There is a composition map
$$
C^{\bullet}(\mathcal{U},\text{Mor}^{\bullet}(F,G)) \times C^{\bullet}(\mathcal{U},\text{Mor}^{\bullet}(E,F))\rightarrow C^{\bullet}(\mathcal{U},\text{Mor}^{\bullet}(E,G)).
$$
In fact, for $u^{p,q}\in C^p(\mathcal{U},\text{Mor}^q(F,G))$ and $v^{r,s} \in C^{r}(\mathcal{U},\text{Mor}^{s}(E,F))$, their composition $(u\cdot v)^{p+r,q+s}$ is given by (see \cite{o1981trace} Equation (1.1))
\begin{equation}\label{equation: composition of maps between graded sheaves}
(u\cdot v)^{p+r,q+s}_{i_0\ldots i_{p+r}}=(-1)^{qr}u^{p,q}_{i_0\ldots i_p}v^{r,s}_{i_p\ldots i_{p+r}}
\end{equation}
where the right hand side is the composition of sheaf maps.

In particular $C^{\bullet}(\mathcal{U},\text{Mor}^{\bullet}(E,E))$ becomes an associative algebra under this composition (It is easy but tedious to check the associativity).

There is also a \v{C}ech-style differential operator $\delta$ on $C^{\bullet}(\mathcal{U},\text{Mor}^{\bullet}(E,F))$ and of bidegree $(1,0)$ given by the formula
\begin{equation}\label{equation: delta on maps}
(\delta u)^{p+1,q}_{i_0\ldots i_{p+1}}=\sum_{k=1}^p(-1)^k u^{p,q}_{i_0\ldots \widehat{i_k} \ldots i_{p+1}}|_{U_{i_0\ldots i_{p+1}}} \,\text{ for } u^{p,q}\in C^p(\mathcal{U},\text{Mor}^q_{\mathfrak{F}}(E,F))
\end{equation}
It is not difficult to check that the \v{C}ech differential satisfies the Leibniz rule.

Now we can introduce the definition of twisted complexes.

\begin{defi}\label{defi: twisted complex}
Let $X$ be a paracompact topological space and $\mathfrak{F}$ be a dg-presheaf on $X$. Let $\mathcal{U}=\{U_i\}$ be an locally finite open cover of $X$. A \emph{twisted  complex} consists of a collection objects $E_i$ of $\mathfrak{F}(U_i)$ together with a collection of morphisms
$$
a=\sum_{k \geq 0} a^{k,1-k}
$$
where $a^{k,1-k}\in C^k(\mathcal{U},\text{Mor}^{1-k}(E,E)) $  which satisfies the equation
\begin{equation}\label{equation: MC for twisted complex}
\delta a+ a\cdot a=0.
\end{equation}
More explicitly, for $k\geq 0$
\begin{equation}\label{equation: MC for twisted complex explicit}
\delta a^{k-1,2-k}+ \sum_{i=0}^k a^{i,1-i}\cdot a^{k-i,1-k+i}=0.
\end{equation}

We impose two additional requirements on $a$:
\begin{enumerate}
\item For any $u^{p,q}\in \text{Mor}^q_{\mathfrak{F}}(E_{i_p},E_{i_0})$, the assignment
$$
u^{p,q}\mapsto (-1)^p[a^{0,1}_{i_0}\cdot u^{p,q}-(-1)^{p+q}u^{p,q}\cdot a^{0,1}_{i_p}]\in \text{Mor}^{q+1}_{\mathfrak{F}}(E_{i_p},E_{i_0})
$$
coincides with the differential in the dg-category $\mathfrak{F}(U_{i_0\ldots i_p})$;

\item $a^{1,0}_{ii}\in \text{Mor}^0_{\mathfrak{F}}(E_i,E_i)$ is invertible up to homotopy.
\end{enumerate}

Twisted  complexes on $(X,\mathfrak{F}, \{U_i\})$ form a dg-category: the objects are   twisted  complexes $(E_i,a)$ and the morphism from $\mathcal{E}=(E_i,a)$ to $\mathcal{F}=(F_i,b)$ are $C^{\bullet}(\mathcal{U},\text{Mor}^{\bullet}(E,F))$ with the total degree. Moreover, the differential on a morphism $\phi$ is given by
\begin{equation}\label{equation: differential on morphisms of twisted complexes}
d \phi=\delta \phi+b\cdot \phi-(-1)^{|\phi|}\phi\cdot a.
\end{equation}

We denote the dg-category of twisted complexes on $(X,\mathfrak{F}, \{U_i\})$ by $\text{Tw}(X, \mathfrak{F}, U_i)$. If there is no danger of confusion we can simply denote it by $\text{Tw}(X)$.
\end{defi}

Here we list some special cases of twisted complexes for various dg-presheaves $\mathfrak{F}$:
\begin{enumerate}
\item Let $(X,\mathcal{R})$ be a ringed space and $\mathfrak{F}=\text{Cpx}$ be the dg-presheaf which assigns to each open subspace $U$ the dg-category of
complexes of left $\mathcal{R}$-modules on $U$, then the dg-category of twisted complexes $\text{Tw}(X, \mathfrak{F}, U_i)$ as in Definition \ref{defi: twisted complex} is exactly the dg-category of twisted complexes $\text{Tw}(X, \mathcal{R}, U_i)$  as in \cite{wei2016twisted} Definition 2.12.

\item Again let $(X,\mathcal{R})$ be a ringed space and $\mathfrak{F}=\Perf$ be the dg-presheaf which assigns to each open subspace $U$ the dg-category of
bounded complexes of finitely generated locally free left $\mathcal{R}$-modules on $U$, then the dg-category of twisted complexes $\text{Tw}(X, \mathfrak{F}, U_i)$ as in Definition \ref{defi: twisted complex} is exactly the dg-category of twisted perfect complexes $\text{Tw}_{\perf}(X, \mathcal{R}, U_i)$  as in \cite{wei2016twisted} Definition 2.14, which is also called \emph{twisted cochain} in \cite{o1981trace}.

\item Let $X$ be a complex manifold and $\mathfrak{F}=\mathcal{P}_{\mathcal{A}}$. The  dg-category of twisted complexes $\text{Tw}(X, \mathcal{P}_{\mathcal{A}}, U_i)$ is the main subject of this paper.

\item Let $X$ be a complex manifold and $\mathfrak{F}=\mathcal{P}_{\mathcal{A}^0}$. Then the dg-category of twisted complexes $\text{Tw}(X, \mathcal{P}_{\mathcal{A}^0}, U_i)$ is the same as $\text{Tw}_{\text{perf}}(X, \mathcal{A}^0, U_i)$ and we will further study it in Section \ref{section: the gluing of underlying complexes}.

\item Let $X$ be a complex manifold and $\mathfrak{F}=\mathcal{C}_{\mathcal{A}}$ or $\mathcal{C}_{\mathcal{A}^0}$, the dg-presheaf of quasi-cohesive modules.  The resulting dg-categories $\text{Tw}(X, \mathcal{C}_{\mathcal{A}}, U_i)$ and $\text{Tw}(X, \mathcal{C}_{\mathcal{A}^0}, U_i)$ play auxiliary roles in this paper.
\end{enumerate}

The importance of twisted complexes in descent theory is illustrated by the following theorem.

\begin{thm}\label{thm: twisted complexes give homotopy limit}[\cite{block2017explicit}, \cite{arkhipov2021homotopy}]
Let $\mathfrak{F}$ be a dg-presheaf. The dg-category $\text{Tw}(X, \mathfrak{F}, U_i)$ is quasi-equivalent to the homotopy limit of the cosimplicial diagram
$$
\begin{tikzcd}
\prod \mathfrak{F}(U_i) \arrow[yshift=0.7ex]{r}\arrow[yshift=-0.7ex]{r}& \prod \mathfrak{F}(U_i\cap U_j) \arrow[yshift=1ex]{r}\arrow{r}\arrow[yshift=-1ex]{r}  &  \prod \mathfrak{F}(U_i\cap U_j\cap U_k)  \arrow[yshift=1.2ex]{r}\arrow[yshift=0.4ex]{r}\arrow[yshift=-0.4ex]{r}\arrow[yshift=-1.2ex]{r}&\cdots
\end{tikzcd}
$$
$\square$
\end{thm}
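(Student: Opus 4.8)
The plan is to produce an explicit quasi-equivalence between $\text{Tw}(X,\mathfrak{F},U_i)$ and the homotopy limit of the \v{C}ech cosimplicial diagram by identifying a twisted complex with a homotopy-coherent descent datum. First I would fix a computational model for the homotopy limit of a cosimplicial diagram in $\text{DgCat}_{\text{DK}}$: one takes a Reedy-fibrant replacement of the cosimplicial diagram and forms its Bousfield--Kan style totalization, which for a cosimplicial object $\mathcal{C}^{\bullet}$ is an end of the form $\int_{[n]\in\Delta}(\mathcal{C}^n)^{\Delta^n}$ built from the appropriate cotensoring of dg-categories. The hypothesis that $\mathfrak{F}$ sends finite coproducts to products is precisely what identifies the $n$-th term of the diagram, $\prod_{i_0\ldots i_n}\mathfrak{F}(U_{i_0\ldots i_n})$, with $\mathfrak{F}$ evaluated on the $n$-th level $\coprod U_{i_0\ldots i_n}$ of the \v{C}ech nerve, so that the cosimplicial diagram in the statement genuinely is $[n]\mapsto\mathfrak{F}(\coprod U_{i_0\ldots i_n})$.

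Next I would unwind what the objects and morphisms of this totalization are and observe that they are governed by a Maurer--Cartan-type equation in a bicomplex assembled from \v{C}ech cochains of $\mathcal{U}$ and the internal differentials of $\mathfrak{F}$. Concretely, a normalized $0$-cochain is a family $\{E_i\}$; the degree-$1$ coherence datum contributes morphisms $E_j\to E_i$ over $U_{ij}$; and in simplicial degree $k$ the cotensor data contributes an element of $C^k(\mathcal{U},\text{Mor}^{1-k}(E,E))$, with the condition that the totalization differential vanish translating into exactly $\delta a+a\cdot a=0$ carrying the O'Brian--Toledo--Tong signs of Equations (\ref{equation: composition of maps between graded sheaves}) and (\ref{equation: delta on maps}). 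The two auxiliary requirements of Definition \ref{defi: twisted complex}---that $a^{0,1}_i$ recovers the internal differential of $\mathfrak{F}(U_{i_0\ldots i_p})$ and that $a^{1,0}_{ii}$ is invertible up to homotopy---are then precisely the normalization and unitality (Segal-type) conditions that cut the raw Maurer--Cartan set down to genuine descent data. This defines a dg-functor $\Phi\colon\text{Tw}(X,\mathfrak{F},U_i)\to\Holim_{\mathcal{U}}\mathfrak{F}(U_i)$, and comparing the differential (\ref{equation: differential on morphisms of twisted complexes}) with the totalization differential on the Hom-complexes shows that $\Phi$ is an isomorphism on morphism complexes after the standard reindexing, hence quasi-fully faithful.

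It remains to prove that $\Phi$ is quasi-essentially surjective, i.e.\ that every object of the homotopy limit is isomorphic in $\text{Ho}(\Holim_{\mathcal{U}}\mathfrak{F}(U_i))$ to one in the image of $\Phi$. For this I would use that the totalization of the Reedy-fibrant replacement is computed by a tower whose successive fibers are controlled by the $\mathfrak{F}(U_{i_0\ldots i_n})$, together with a degreewise strictification: any homotopy-coherent descent datum can be rectified, in each simplicial degree separately, to the normalized form demanded by Definition \ref{defi: twisted complex}. The locally finite hypothesis on $\mathcal{U}$ guarantees that the relevant products and the \v{C}ech differential are well behaved and that the totalization tower converges, so the strictification procedure terminates. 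The main obstacle is exactly this bookkeeping step: reconciling the explicit sign conventions of the twisted-complex formalism and its two auxiliary normalization conditions with the abstract cotensor/totalization model of the homotopy limit, i.e.\ verifying that the Maurer--Cartan description of $\text{Tw}$ is a \emph{strict} model of $\Holim$ and not merely a lax approximation. Once that identification is established---which is the content of \cite{block2017explicit}---the quasi-equivalence follows formally from quasi-full-faithfulness together with quasi-essential surjectivity. $\square$
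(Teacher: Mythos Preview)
The paper does not prove this theorem. It is stated with attribution to \cite{block2017explicit} and closed immediately with a $\square$; there is no proof environment and no argument given. So there is no ``paper's own proof'' to compare your proposal against.

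Your sketch is a plausible outline of the argument that \cite{block2017explicit} carries out---identify the Bousfield--Kan totalization of the cosimplicial diagram, unwind it into Maurer--Cartan data, and match that against Definition~\ref{defi: twisted complex}---and indeed you explicitly acknowledge in your final paragraph that the identification ``is the content of \cite{block2017explicit}''. That said, as written the proposal is more a roadmap than a proof: the serious work (constructing the Reedy-fibrant replacement, verifying that the cotensor/totalization model yields exactly the bicomplex $C^{\bullet}(\mathcal{U},\text{Mor}^{\bullet})$ with the correct signs, and carrying out the strictification for quasi-essential surjectivity) is asserted rather than performed. For the purposes of this paper that is fine, since the author treats the result as a black box; but if you intend this as a standalone proof you would need to supply those details or point precisely to where they are done in \cite{block2017explicit}.
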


It is clear that Theorem \ref{thm: twisted complexes give homotopy limit} applies to all above cases.

\begin{rmk}
Theorem \ref{thm: twisted complexes give homotopy limit} is proved in \cite{block2017explicit} under the condition that $\mathfrak{F}$  sends finite coproducts to
products. In \cite{arkhipov2021homotopy} this condition is removed and the authors prove the result for arbitrary cosimplicial diagram of dg-categories.
\end{rmk}

In \cite[Section 2.5]{wei2016twisted} it has been shown that $\text{Tw}(X, \mathfrak{F}, U_i)$ has a pre-triangulated structure for all above cases, hence Ho$\text{Tw}(X, \mathfrak{F}, U_i)$ is a triangulated category.  In more details we have the following definitions.

\begin{defi}\label{defi: shift of twisted complex}[Shift]
Let $\mathcal{E}=(E^{\bullet}_i,a)$ be a twisted  complex. We define its shift   $\mathcal{E}[1]$ to be $\mathcal{E}[1]=(E[1]^{\bullet}_i,a[1])$ where
$$
E[1]^{\bullet}_i=E^{\bullet+1}_i \text{ and } a[1]^{k,1-k}=(-1)^{k-1}a^{k,1-k}.
$$

Moreover, let $\phi: \mathcal{E}\to \mathcal{F}$ be a morphism. We define its shift $\phi[1]$ as
$$
\phi[1]^{p,q}=(-1)^q\phi^{p,q}.
$$
\end{defi}

\begin{defi}\label{defi: mapping cone}[Mapping cone]
Let $\phi^{\bullet,-\bullet}$ be a closed degree zero map between twisted perfect complexes $\mathcal{E}=(E^{\bullet},a^{\bullet,1-\bullet})$ and $\mathcal{F}=(F^{\bullet},b^{\bullet,1-\bullet})$ , we can define the \emph{mapping cone} $\mathcal{G}=(G,c)$ of $\phi$ as follows (see \cite[Section 1.1]{o1985grothendieck}):
$$
G^n_i:=E^{n+1}_i\oplus F^n_i
$$
and
\begin{equation}\label{equation: diff in mapping cone}
c^{k,1-k}_{i_0\ldots i_k}=\begin{pmatrix}(-1)^{k-1} a^{k,1-k}_{i_0\ldots i_k}&0\\ (-1)^k\phi^{k,-k}_{i_0\ldots i_k}&b^{k,1-k}_{i_0\ldots i_k}\end{pmatrix}.
\end{equation}
\end{defi}

\subsection{Quasi-isomorphisms between twisted complexes}
For $\mathfrak{F}=\mathcal{C}_{\mathcal{A}}$ or $\mathcal{C}_{\mathcal{A}^0}$, we have the following definition of quasi-isomorphism between twisted complexes.

\begin{defi}\label{defi: quasi-isomorphism between twisted complexes}
Let $\phi:\mathcal{E}\to\mathcal{F}$ be a degree zero closed morphism in $\text{Tw}(X, \mathcal{C}_{\mathcal{A}^0}, U_i)$. Then we call $\phi$ a \emph{quasi-isomorphism} if and only if its $(0,0)$-component
$$
\phi^{0,0}:(E^{\bullet}_i,a^{0,1}_i)\to (F^{\bullet}_i,b^{0,1}_i)
$$
is a quasi-isomorphism of complexes of $\mathcal{A}^0(U_i)$-modules for each $i$.

Moreover, let $\phi:\mathcal{E}\to\mathcal{F}$ be a degree zero closed morphism in $\text{Tw}(X, \mathcal{C}_{\mathcal{A}}, U_i)$. Then we call $\phi$ a quasi-isomorphism if and only if $\For(\phi): \For(\mathcal{E})\to\For(\mathcal{F})$ is a quasi-equivalence in $\text{Tw}(X, \mathcal{C}_{\mathcal{A}^0}, U_i)$, where $\For$ is the forgetful functor.
\end{defi}

\begin{rmk}
A quasi-isomorphism is called a weak equivalence in \cite[Definition 2.27]{wei2016twisted}.
\end{rmk}

\begin{lemma}\label{lemma: lift under quasi-isomorphism in twisted complex}
Let $\mathcal{E}$ be an object in  $\text{Tw}(X, \mathcal{P}_{\mathcal{A}}, U_i)$ and $\mathcal{F}$ and $\mathcal{G}$ be two objects in $\text{Tw}(X, \mathcal{C}_{\mathcal{A}}, U_i)$. Let
$\phi:\mathcal{E}\to \mathcal{G}$ be a degree zero closed morphism in $\text{Tw}(X, \mathcal{C}_{\mathcal{A}}, U_i)$ and $\psi: \mathcal{F}\to \mathcal{G}$ be a quasi-isomorphism in $\text{Tw}(X, \mathcal{C}_{\mathcal{A}}, U_i)$. Then $\phi$ could be lifted to a closed degree zero morphism $\eta: \mathcal{E}\to \mathcal{F}$ up to homotopy, i.e. there exists an $\eta: \mathcal{E}\to \mathcal{F}$ such that $\psi\circ \eta=\phi$ up to homotopy. The same result holds for $\text{Tw}(X, \mathcal{C}_{\mathcal{A}^0}, U_i)$.
\end{lemma}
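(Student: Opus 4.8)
The plan is to reduce the statement to a standard lifting property along quasi-isomorphisms, by exploiting the fact that $\mathcal{E}$ lies in $\mathrm{Tw}(X,\mathcal{P}_{\mathcal{A}},U_i)$ rather than merely in $\mathrm{Tw}(X,\mathcal{C}_{\mathcal{A}},U_i)$. First I would recall that a closed degree-zero morphism $\phi\colon\mathcal{E}\to\mathcal{G}$ and a closed degree-zero quasi-isomorphism $\psi\colon\mathcal{F}\to\mathcal{G}$ together determine an element of the total complex of $\mathcal{H}om$-groups, and that solving $\psi\circ\eta=\phi$ up to homotopy is the same as showing that the induced map of complexes $\mathrm{Tw}(\mathcal{E},\mathcal{F})\to\mathrm{Tw}(\mathcal{E},\mathcal{G})$, $\eta\mapsto\psi\circ\eta$, is surjective on $H^0$ — equivalently, that $\psi_*\colon\mathrm{Tw}(\mathcal{E},\mathcal{F})\to\mathrm{Tw}(\mathcal{E},\mathcal{G})$ is a quasi-isomorphism of complexes (it suffices to get surjectivity on $H^0$, but getting the full quasi-isomorphism is no harder and is conceptually cleaner). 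So the real content is: if $\psi\colon\mathcal{F}\to\mathcal{G}$ is a quasi-isomorphism in $\mathrm{Tw}(X,\mathcal{C}_{\mathcal{A}},U_i)$, then for $\mathcal{E}\in\mathrm{Tw}(X,\mathcal{P}_{\mathcal{A}},U_i)$ the map $\psi_*$ on morphism complexes is a quasi-isomorphism.

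Next I would set up a filtration argument on the morphism complex. The total complex $\mathrm{Tw}(\mathcal{E},\mathcal{F})=C^{\bullet}(\mathcal{U},\mathrm{Mor}^{\bullet}_{\mathcal{C}_{\mathcal{A}}}(E,F))$ carries the \v{C}ech degree $p$, and the full differential $d\phi=\delta\phi+b\cdot\phi-(-1)^{|\phi|}\phi\cdot a$ only increases $p$ (via $\delta$ and via the higher components $a^{k,1-k}$, $b^{k,1-k}$ with $k\geq1$) or preserves it (via $a^{0,1}$, $b^{0,1}$). Filtering by \v{C}ech degree (the filtration is finite on each intersection since $\mathcal{U}$ is locally finite and $X$ is compact — or more carefully, one uses the convergence of the associated spectral sequence, which is guaranteed here), the associated graded differential is the one induced purely by $a^{0,1}$ and $b^{0,1}$ on each $\mathrm{Mor}_{\mathcal{C}_{\mathcal{A}}}(E_{i_p},F_{i_0})$ over $U_{i_0\ldots i_p}$. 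On the $E_1$-page the map induced by $\psi$ is $(\psi^{0,0})_*$; I must show this is a quasi-isomorphism. This is where the hypothesis $\mathcal{E}\in\mathrm{Tw}(X,\mathcal{P}_{\mathcal{A}},U_i)$ enters: each $E_{i_p}$ is a genuine cohesive module, so by Proposition 2.7 (the characterization of homotopy equivalences in $\mathcal{P}_{\mathcal{A}}$, applied on $U_{i_0\ldots i_p}$) a quasi-isomorphism into $F_{i_0}$ in $\mathcal{C}_{\mathcal{A}}$ induces a quasi-isomorphism on $\mathrm{Hom}$-complexes out of the cohesive module $E_{i_p}$ — this uses that $E_{i_p}$ is bounded, finitely generated and projective, so $\mathrm{Hom}_{\mathcal{A}^0}(E_{i_p},-)$ is exact and commutes with the $\bar\partial$-$\mathbb{Z}$-connection differential. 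One then invokes that quasi-isomorphism on each term of the \v{C}ech complex and concludes that $\psi_*$ is a quasi-isomorphism on the associated graded, hence on the total complex by the spectral sequence comparison theorem.

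The $\mathcal{C}_{\mathcal{A}^0}$ case is literally the same argument with the $\bar\partial$ part erased: there $\mathrm{Mor}_{\mathcal{C}_{\mathcal{A}^0}}(E_{i_p},F_{i_0})$ is just a $\mathrm{Hom}$-complex of $\mathcal{A}^0$-modules, $E_{i_p}$ is a bounded complex of f.g. projectives, and a quasi-isomorphism $F_{i_0}\to G_{i_0}$ of $\mathcal{A}^0$-complexes induces a quasi-isomorphism after applying $\mathrm{Hom}_{\mathcal{A}^0}(E_{i_p},-)$; the rest of the filtration argument carries over verbatim. Finally, from $\psi_*$ being a quasi-isomorphism on morphism complexes I read off the statement: the class $[\phi]\in H^0(\mathrm{Tw}(\mathcal{E},\mathcal{G}))$ lifts to a class $[\eta]\in H^0(\mathrm{Tw}(\mathcal{E},\mathcal{F}))$, meaning there is a closed degree-zero $\eta\colon\mathcal{E}\to\mathcal{F}$ with $\psi\circ\eta-\phi$ exact, i.e. $\psi\circ\eta=\phi$ up to homotopy.

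I expect the main obstacle to be the bookkeeping around the filtration and the convergence of the associated spectral sequence: one has to be careful that, although individual \v{C}ech cochains can be nonzero in arbitrarily high degree when the cover is infinite, local finiteness of $\mathcal{U}$ makes the filtration exhaustive and Hausdorff with the relevant $\lim^1$ vanishing, so the spectral sequence of the filtered complex converges and the comparison on $E_1$ suffices. A secondary subtlety is making sure the "up to homotopy" in requirement (2) of Definition 3.1 (invertibility of $a^{1,0}_{ii}$) and the fact that $\psi$ is only a quasi-isomorphism (not an isomorphism) do not obstruct the argument — but they do not, since everything is phrased at the level of $H^0$ of morphism complexes, where Proposition 2.7 already tells us quasi-isomorphisms out of cohesive modules behave like isomorphisms.
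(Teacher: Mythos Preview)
Your proposal is correct and matches the paper's approach: the paper simply writes ``It is a standard spectral sequence argument. See \cite{wei2016twisted} Lemma 2.30,'' and you have spelled out exactly that argument (filter the morphism complex by \v{C}ech degree, identify the associated graded with the local $\mathrm{Hom}$-complexes, use projectivity of $E_{i_p}$ to see $(\psi^{0,0})_*$ is a quasi-isomorphism on $E_1$, and conclude by spectral sequence comparison). One small remark: where you invoke Proposition~\ref{prop: homotopy equivalences in PA}, the more precise reference within the paper is Proposition~\ref{prop: quasi-isomorphism of quasi-cohesive modules induces quasi-isomorphism for Yoneda embedding}, since you need that a quasi-isomorphism in $\mathcal{C}_{\mathcal{A}}$ induces a quasi-isomorphism on $\mathcal{C}_{\mathcal{A}}(E,-)$ for $E\in\mathcal{P}_{\mathcal{A}}$, not that both objects lie in $\mathcal{P}_{\mathcal{A}}$.
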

\begin{proof}
It is a standard spectral sequence argument. See \cite[Lemma 2.30]{wei2016twisted}.
\end{proof}

We have some further results on quasi-isomorphisms if both objects are in $\text{Tw}(X, \mathcal{P}_{\mathcal{A}}, U_i)$.

\begin{prop}\label{prop: quasi-isomorphism of twisted complexes  in PA is homotopy equivalence}
Let $\mathcal{E}$ and $\mathcal{F}$ be objects in $\text{Tw}(X, \mathcal{P}_{\mathcal{A}}, U_i)$. Then a degree zero closed morphism $\phi: \mathcal{E}\to\mathcal{F}$ is a quasi-isomorphism if and only if $\phi$ is a homotopy equivalence. The same result holds for $\text{Tw}(X, \mathcal{P}_{\mathcal{A}^0}, U_i)$.
\end{prop}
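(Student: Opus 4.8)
The plan is to prove the two implications separately. The direction ``homotopy equivalence $\Rightarrow$ quasi-isomorphism'' is formal and follows from the existence of a convenient dg-functor, while the converse will be reduced to the lifting property already recorded in Lemma \ref{lemma: lift under quasi-isomorphism in twisted complex}.

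For the easy direction, I would first observe that extracting the \v{C}ech-degree-zero part of a twisted complex and of a morphism defines a dg-functor
$$
\rho\colon \text{Tw}(X,\mathcal{P}_{\mathcal{A}^0},U_i)\longrightarrow \prod_i \mathcal{P}_{\mathcal{A}^0(U_i)},\qquad (E^{\bullet}_i,a)\longmapsto (E^{\bullet}_i,a^{0,1}_i),\quad \phi\longmapsto (\phi^{0,\bullet}_i)_i .
$$
Since $\delta$ strictly raises the \v{C}ech degree, while the composition (\ref{equation: composition of maps between graded sheaves}) and the differential (\ref{equation: differential on morphisms of twisted complexes}) add \v{C}ech degrees, a short sign check shows that $\rho$ preserves composition and carries the differential of a morphism to the differential of its $(0,\bullet)$-component viewed as a map of complexes of $\mathcal{A}^0(U_i)$-modules; requirement $(1)$ of Definition \ref{defi: twisted complex} ensures that $a^{0,1}_i$ squares to zero, so the target is well defined. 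By Definition \ref{defi: quasi-isomorphism between twisted complexes}, a degree-zero closed $\phi$ is a quasi-isomorphism in $\text{Tw}(X,\mathcal{P}_{\mathcal{A}^0},U_i)$ (resp.\ in $\text{Tw}(X,\mathcal{P}_{\mathcal{A}},U_i)$) exactly when $\rho(\phi)$ (resp.\ $\rho(\For\phi)$) is a quasi-isomorphism in each factor. As any dg-functor sends homotopy equivalences to homotopy equivalences, and a homotopy equivalence of bounded complexes of finitely generated projective modules is in particular a quasi-isomorphism, every homotopy equivalence $\phi$ is a quasi-isomorphism.

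For the reverse direction, assume $\phi\colon \mathcal{E}\to\mathcal{F}$ is a quasi-isomorphism with $\mathcal{E},\mathcal{F}\in\text{Tw}(X,\mathcal{P}_{\mathcal{A}},U_i)$. I would apply Lemma \ref{lemma: lift under quasi-isomorphism in twisted complex} to lift $\mathrm{id}_{\mathcal{F}}$ along the quasi-isomorphism $\phi$ — this is legitimate because $\mathcal{F}$ lies in $\text{Tw}(X,\mathcal{P}_{\mathcal{A}},U_i)$ — obtaining $\psi\colon \mathcal{F}\to\mathcal{E}$ with $\phi\circ\psi\simeq \mathrm{id}_{\mathcal{F}}$. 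Applying $\rho\circ\For$ and passing to cohomology shows that $\psi$ is itself a quasi-isomorphism, so Lemma \ref{lemma: lift under quasi-isomorphism in twisted complex} applies again, now to lift $\mathrm{id}_{\mathcal{E}}$ along $\psi$, yielding $\eta\colon \mathcal{E}\to\mathcal{F}$ with $\psi\circ\eta\simeq\mathrm{id}_{\mathcal{E}}$. The standard formal manipulation $\eta\simeq(\phi\circ\psi)\circ\eta=\phi\circ(\psi\circ\eta)\simeq\phi$ then gives $\psi\circ\phi\simeq\psi\circ\eta\simeq\mathrm{id}_{\mathcal{E}}$, so $\psi$ is a two-sided homotopy inverse and $\phi$ is a homotopy equivalence. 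Because $\mathcal{P}_{\mathcal{A}}(U_i)$ is a full dg-subcategory of $\mathcal{C}_{\mathcal{A}}(U_i)$, the category $\text{Tw}(X,\mathcal{P}_{\mathcal{A}},U_i)$ is full in $\text{Tw}(X,\mathcal{C}_{\mathcal{A}},U_i)$, so $\psi$, $\eta$ and all the homotopies produced by Lemma \ref{lemma: lift under quasi-isomorphism in twisted complex} automatically live in $\text{Tw}(X,\mathcal{P}_{\mathcal{A}},U_i)$. The $\mathcal{P}_{\mathcal{A}^0}$ case is handled by the same argument, using the corresponding assertion of Lemma \ref{lemma: lift under quasi-isomorphism in twisted complex} and omitting $\For$.

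The bulk of the genuine work is concealed inside Lemma \ref{lemma: lift under quasi-isomorphism in twisted complex} (a spectral sequence / obstruction argument exploiting local finiteness of the cover), which we are permitted to invoke; within the present proof the only points requiring real care are the sign bookkeeping that makes $\rho$ a dg-functor, and the fullness observation guaranteeing that the homotopy inverse does not leave $\mathcal{P}_{\mathcal{A}}$. This last point is precisely the phenomenon behind the warning after Proposition \ref{prop: homotopy equivalences in PA}: the statement genuinely fails when one of $\mathcal{E},\mathcal{F}$ is only quasi-cohesive.
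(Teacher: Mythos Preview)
Your argument is correct, but it is organised differently from the paper's.  For the nontrivial direction the paper first applies Proposition~\ref{prop: homotopy equivalences in PA} on each $U_i$ to obtain local homotopy inverses $\psi_i\colon\mathcal{F}_i\to\mathcal{E}_i$ in $\mathcal{P}_{\mathcal{A}(U_i)}$, and then runs a spectral-sequence argument (parallel to the proof of \cite{block2010duality} Proposition~2.9) to extend the collection $\{\psi_i\}$ to a closed morphism in $\text{Tw}(X,\mathcal{P}_{\mathcal{A}},U_i)$ that serves as a homotopy inverse of $\phi$.  You instead invoke Lemma~\ref{lemma: lift under quasi-isomorphism in twisted complex} twice---once to lift $\mathrm{id}_{\mathcal{F}}$ through $\phi$, and again (after observing the resulting $\psi$ is a quasi-isomorphism) to lift $\mathrm{id}_{\mathcal{E}}$ through $\psi$---and finish with the standard left/right inverse juggle.

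Both proofs ultimately rest on the same spectral-sequence machinery for twisted complexes; the difference is one of packaging.  Your route has the virtue that the obstruction theory is used exactly once, inside Lemma~\ref{lemma: lift under quasi-isomorphism in twisted complex}, and no separate appeal to Proposition~\ref{prop: homotopy equivalences in PA} is needed.  The paper's route, on the other hand, makes explicit that the global inverse is assembled from the local inverses $\psi_i$, which is conceptually informative.  Your fullness observation (that the morphisms and homotopies produced by Lemma~\ref{lemma: lift under quasi-isomorphism in twisted complex} automatically lie in $\text{Tw}(X,\mathcal{P}_{\mathcal{A}},U_i)$) is a point the paper leaves implicit and is worth stating.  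The easy direction via the dg-functor $\rho$ is also a clean addition; the paper does not spell it out.
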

\begin{proof}
By Definition \ref{defi: quasi-isomorphism between twisted complexes}, $\phi^{0,0}_i:\mathcal{E}_i\to \mathcal{F}_i$ is a quasi-isomorphism in $\mathcal{P}_{\mathcal{A}(U_i)}$. Then by Proposition \ref{prop: homotopy equivalences in PA}, we have its homotopy inverse $\psi_i:\mathcal{F}_i\to \mathcal{E}_i$ in $\mathcal{P}_{\mathcal{A}(U_i)}$. By a simple spectral sequence
argument which is the same as the proof of \cite{block2010duality} Proposition 2.9, we could extend $\psi_i$ to a degree zero closed morphism in $\text{Tw}(X, \mathcal{P}_{\mathcal{A}^0}, U_i)$. A similar argument works for $\text{Tw}(X, \mathcal{P}_{\mathcal{A}^0}, U_i)$. See also \cite[Proposition 2.31]{wei2016twisted}.
\end{proof}

\subsection{Globally bounded twisted complexes}\label{subsec: globally bounded twisted}
When $\mathfrak{F}=\Perf$, $\mathcal{P}_{\mathcal{A}}$, or $\mathcal{P}_{\mathcal{A}^0}$, we have the following dg-subcategory of $\text{Tw}(X, \mathfrak{F}, U_i)$

\begin{defi}\label{defi: globally bounded twisted complexes}
Let $\mathfrak{F}$ be $\Perf$, $\mathcal{P}_{\mathcal{A}}$, or $\mathcal{P}_{\mathcal{A}^0}$. A twisted complex $(E_i, a)$ in $\text{Tw}(X, \mathfrak{F}, U_i)$ is called \emph{globally bounded} if there exist integers $a<b$ and $N>0$ such that on each $U_i$ the underline complex $E^{\bullet}_i$ of $\mathfrak{F}(U_i)$ is concentrated in degrees $[a,b]$ and each of the $E^k_i$'s has rank $\leq N$.

Globally bounded twisted complexes form a full dg-subcategory of $\text{Tw}(X, \mathfrak{F}, U_i)$ and we denote it by $\text{Tw}^{\text{B}}(X, \mathfrak{F}, U_i)$.
\end{defi}

It is clear that $\text{Tw}^{\text{B}}(X, \mathfrak{F}, U_i)$ inherits the pre-triangulated structure from $\text{Tw}(X, \mathfrak{F}, U_i)$. Moreover Lemma \ref{lemma: lift under quasi-isomorphism in twisted complex} and Proposition \ref{prop: quasi-isomorphism of twisted complexes  in PA is homotopy equivalence} applies to $\text{Tw}^{\text{B}}(X, \mathcal{P}_{\mathcal{A}}, U_i)$ as well.

\begin{rmk}
If the open cover $\{U_i\}$ is finite, then $\text{Tw}^{\text{B}}(X, \mathfrak{F}, U_i)$ coincides with $\text{Tw}(X, \mathfrak{F}, U_i)$.
\end{rmk}

\begin{rmk}
In general, the analogue of the claim in  
Theorem \ref{thm: twisted complexes give homotopy limit} does not hold for $\text{Tw}^{\text{B}}$ unless the cover  $\{U_i\}$ is finite.
\end{rmk}

\begin{rmk}
Using the same method as in \cite{wei2016twisted}, we can prove that the dg-category $\text{Tw}^{\text{B}}(X, \Perf, U_i)$ gives a dg-enhancement of the derived category of globally bounded perfect complexes   $D^{\text{B}}_{\perf}(X)$. Nevertheless we do not need this result in this paper.
\end{rmk}



\subsection{The twisting functor and the sheafification functor}
For $\mathfrak{F}=\mathcal{C}_{\mathcal{A}}$ or $\mathcal{C}_{\mathcal{A}^0}$, we  define a pair of adjoint dg-functors
\begin{equation}
\mathcal{T}: \mathcal{C}_{\mathcal{A}}(X)\rightleftarrows \text{Tw}(X, \mathcal{C}_{\mathcal{A}}, U_i): \mathcal{S}
\end{equation}
and study their properties in this subsection.

First we define the natural dg-functor  $\mathcal{T}: \mathcal{C}_{\mathcal{A}}(X)\to \text{Tw}(X, \mathcal{C}_{\mathcal{A}}, U_i)$
\begin{defi}[\cite{wei2016twisted} Definition 3.11]\label{defi: twisted functor}
Let $(\mathcal{Q},\mathbb{Q})$ be an object in $\mathcal{C}_{\mathcal{A}}(X)$. We define its associated twisted  complex $\mathcal{T}(Q)\in \text{Tw}(X, \mathcal{C}_{\mathcal{A}}, U_i)$  by restricting to the $U_i$'s. In more details, we define $(E^{\bullet},a)=\mathcal{T}(Q)$ as
$$
E^{n}_i=\mathcal{Q}^n|_{U_i}
$$
and
$$
a^{0,1}_i=\mathbb{Q}|_{U_i},~a^{1,0}_{ij}=\id_{\mathcal{Q}^{\bullet}|_{U_{ij}}} \text{ and } a^{k,1-k}=0 \text{ for }k\geq 2.
$$
The $\mathcal{T}$ of morphisms is defined in a similar way.
We call the dg-functor $\mathcal{T}: \mathcal{C}_{\mathcal{A}}(X)\to \text{Tw}(X, \mathcal{C}_{\mathcal{A}}, U_i)$ the \emph{twisting functor}. We can define $\mathcal{T}: \mathcal{C}_{\mathcal{A}^0}(X)\to \text{Tw}(X, \mathcal{C}_{\mathcal{A}^0}, U_i)$ in the same way.
\end{defi}

The definition of $\mathcal{S}: \text{Tw}(X, \mathcal{C}_{\mathcal{A}}, U_i)\to \mathcal{C}_{\mathcal{A}}(X)$ is more complicated.
First we noticed that a  twisted complex $\mathcal{E}=(E^{\bullet}_i,a)$ is not a globally defined quasi-cohesive complex  on $X$. Nevertheless in this subsection we associate a global complex  to each twisted complex.

Let $E_i$ be an object in  $\mathcal{C}_{\mathcal{A}(U_i)}$. By Proposition \ref{prop: pushforward for open subset}, we could use the pushforward $r_*$ to treat $E_i$ as an object in $\mathcal{C}_{\mathcal{A}(X)}$. Moreover, for $E_{i_0}$  in  $\mathcal{C}_{\mathcal{A}(U_{i_0})}$, we could first restrict $E_{i_0}$ to $\mathcal{C}_{\mathcal{A}(U_{i_0\ldots i_k})}$ and then pushforward to $\mathcal{C}_{\mathcal{A}(X)}$.

\begin{defi}\label{defi: sheaf associated to twisted complex}[\cite[Definition 3.1]{wei2016twisted}]
For a twisted complex of quasi-cohesive modules $\mathcal{E}=(E_i,a)$, we define the associated quasi-cohesive module $\mathcal{S}(\mathcal{E})$ on $X$ as follows: for each $n$, the degree $n$ component $\mathcal{S}^n(\mathcal{E})$ is an $\mathcal{A}^0(X)$-module
$$
\mathcal{S}^n(\mathcal{E}):=\prod_{p+q=n}\prod_{i_0\ldots i_p}E^q_{i_0}|_{U_{i_0\ldots i_p}}
$$
where the right hand side is considered as an $\mathcal{A}^0(X)$-module by pushforward.

The connection on $\mathcal{S}^{\bullet}(\mathcal{E})$ is defined to be  of $\mathbb{S}(\mathcal{E})=\delta+a$ considered as morphisms on $X$.

It is obvious that $(\mathcal{S}^{\bullet}(\mathcal{E}),\mathbb{S}(\mathcal{E}))$ is a quasi-cohesive module in  $\mathcal{C}_{\mathcal{A}}(X)$. The functor $\mathcal{S}$ on morphisms is defined in the same way. We call $\mathcal{S}$ the \emph{sheafification functor}.
\end{defi}

\begin{rmk}
The  functor $\mathcal{S}$ for $\mathcal{P}_{\mathcal{A}}$ is a generalization of the functor $\tilde{A}$ in \cite[Definition 6.2]{ben2013milnor}, and $\mathcal{S}$ for $\mathcal{P}_{\mathcal{A}^0}$ is a generalization of the functor $\tilde{\psi}$ in \cite[Equation (5.6)]{ben2013milnor}.
\end{rmk}

\begin{rmk}
It is clear that $\mathcal{T}$ restricts to a functor $\mathcal{T}: \mathcal{P}_{\mathcal{A}}(X)\to \text{Tw}(X, \mathcal{P}_{\mathcal{A}}, U_i)$ as well as $\mathcal{T}: \mathcal{P}_{\mathcal{A}^0}(X)\to \text{Tw}(X, \mathcal{P}_{\mathcal{A}^0}, U_i)$. On the other hand, the image of the functor $\mathcal{S}: \text{Tw}(X, \mathcal{P}_{\mathcal{A}}, U_i)\to \mathcal{C}_{\mathcal{A}}(X)$ is not contained in $\mathcal{P}_{\mathcal{A}}(X)$.
\end{rmk}

\begin{prop}\label{prop: adjunction of twisted functor and sheafification functor}
$$
\mathcal{T}: \mathcal{C}_{\mathcal{A}}(X)\rightleftarrows \text{Tw}(X, \mathcal{C}_{\mathcal{A}}, U_i): \mathcal{S}
$$
is a pair of adjoint functors. Moreover, the unit morphism of the adjunction $\epsilon(E): E\to \mathcal{S}\circ\mathcal{T}(E)$ is a quasi-isomorphism (in the sense of Definition \ref{defi: quasi-isomorphism between quasi-cohesive modules}) for any object $E\in \mathcal{C}_{\mathcal{A}}(X)$.  The same results applies to
$\mathcal{T}: \mathcal{C}_{\mathcal{A}^0}(X)\rightleftarrows \text{Tw}(X, \mathcal{C}_{\mathcal{A}^0}, U_i): \mathcal{S}$.
\end{prop}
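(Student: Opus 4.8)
The plan is to establish the adjunction by an explicit comparison of the two morphism complexes, and then to recognize the unit map as a \v Cech-type assembly whose acyclicity is governed by the softness results recalled in Appendix \ref{appendix: some generalities of soft sheaves}.

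First I would prove the adjunction on the level of graded morphism spaces. Unwinding Definition \ref{defi: twisted complex}, a morphism $\mathcal{T}(E)\to\mathcal{F}$ is an element of $\bigoplus_{p,q}C^p(\mathcal{U},\mathrm{Mor}^q(\mathcal{T}(E),\mathcal{F}))$, i.e.\ a family of $\mathcal{A}(U_{i_0\ldots i_p})$-linear maps $E|_{U_{i_0\ldots i_p}}\to F_{i_0}|_{U_{i_0\ldots i_p}}$ (using $\mathcal{T}(E)_{i_p}=E|_{U_{i_p}}$, and that a double restriction is a single one). On the other side, since $\mathcal{S}(\mathcal{F})^n=\prod_{p+q=n}\prod_{i_0\ldots i_p}r_*\!\big(F^q_{i_0}|_{U_{i_0\ldots i_p}}\big)$ is built out of pushforwards, a morphism $E\to\mathcal{S}(\mathcal{F})$ in $\mathcal{C}_{\mathcal{A}}(X)$ is a family of $\mathcal{A}(X)$-linear maps $E\to r_*\!\big(F^q_{i_0}|_{U_{i_0\ldots i_p}}\big)$; by the extension-of-scalars/restriction-of-scalars adjunction — which applies precisely because the restriction maps $r\colon\mathcal{A}(X)\to\mathcal{A}(U_{i_0\ldots i_p})$ satisfy condition (\ref{equation: tensor of dg-algebras}) used to define $r_*$ — these correspond bijectively to $\mathcal{A}(U_{i_0\ldots i_p})$-linear maps $r^*E=E|_{U_{i_0\ldots i_p}}\to F^q_{i_0}|_{U_{i_0\ldots i_p}}$. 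Matching bidegrees gives the desired isomorphism of graded modules, visibly natural in $E$ and $\mathcal{F}$; the same argument with $\mathcal{A}$ replaced by $\mathcal{A}^0$ covers the $\mathcal{C}_{\mathcal{A}^0}$ statement.

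Next I would check that this bijection is compatible with differentials. The differential on the $\mathrm{Tw}$-side is $\delta\phi+b\cdot\phi-(-1)^{|\phi|}\phi\cdot a$ (formula (\ref{equation: differential on morphisms of twisted complexes})), where $a$ is the data of $\mathcal{T}(E)$, so $a^{0,1}_i=\mathbb{E}|_{U_i}$, $a^{1,0}_{ij}=\mathrm{id}$ and $a^{k,1-k}=0$ for $k\ge2$; on the $\mathcal{C}_{\mathcal{A}}(X)$-side it is $\mathbb{S}(\mathcal{F})\circ(-)-(-1)^{|\cdot|}(-)\circ\mathbb{E}$ with $\mathbb{S}(\mathcal{F})=\delta+b$. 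Under the identification, the term $(-)\circ\mathbb{E}$ corresponds to $\phi\cdot a^{0,1}$; the composition $\phi\cdot a^{1,0}=\phi\cdot\mathrm{id}$ produces exactly the outer \v Cech face, which on the $\mathcal{S}$-side is already incorporated into $\delta$ acting on $\mathcal{S}(\mathcal{F})$ (there it costs nothing because $F_{i_0}$ depends only on the index $i_0$); the interior faces match the \v Cech $\delta$ of (\ref{equation: delta on maps}); and $b\cdot\phi$ matches the part of $\mathbb{S}(\mathcal{F})\circ(-)$ coming from $b$. The signs come out on comparing the composition formula (\ref{equation: composition of maps between graded sheaves}) and the definitions of $\mathcal{S}$ and of the pushforward with (\ref{equation: differential on morphisms of twisted complexes}); this is routine though tedious, and is the $\mathcal{C}_{\mathcal{A}}$-analogue of the corresponding computation in \cite{wei2016twisted}. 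Together with the first step this proves $\mathcal{T}\dashv\mathcal{S}$.

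Finally I would show the unit $\epsilon(E)\colon E\to\mathcal{S}\mathcal{T}(E)$ is a quasi-isomorphism. Since $\mathrm{For}$ commutes with $\mathcal{T}$, with $\mathcal{S}$ and with pushforward, and since ``quasi-isomorphism'' in $\mathcal{C}_{\mathcal{A}}$ is tested by $\mathrm{For}$ (Definition \ref{defi: quasi-isomorphism between quasi-cohesive modules}), it suffices to treat the $\mathcal{C}_{\mathcal{A}^0}$ case. For a complex $(E^\bullet,\mathbb{E}^0)$ of $\mathcal{A}^0(X)$-modules, $\mathcal{S}\mathcal{T}(E)$ is the product-total complex of the \v Cech bicomplex $C^p(\mathcal{U},\mathcal{E}^q)$ with vertical differential $\mathbb{E}^0$ and horizontal differential the full \v Cech differential (the transition maps $a^{1,0}_{ij}=\mathrm{id}$ of $\mathcal{T}(E)$ turn the interior-face operator of (\ref{equation: delta on maps}) into the full alternating sum), where $\mathcal{E}^q$ is the presheaf $V\mapsto E^q\otimes_{\mathcal{A}^0(X)}\mathcal{A}^0(V)$, and $\epsilon(E)$ is $e\mapsto(e|_{U_i})_i$ into \v Cech degree $0$. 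I would then run the spectral sequence of the \v Cech filtration on this product-total complex — it converges because $\mathcal{U}$ is locally finite, so the bicomplex is locally finite in the \v Cech direction — and invoke Appendix \ref{appendix: some generalities of soft sheaves}: the sheaves of $\mathcal{A}^0$-modules occurring on its pages are soft because $\mathcal{A}^0$ is a fine sheaf of rings, also under the closure-of-$U$ convention of Definition \ref{defi: cohesive modules on open subset} (here local finiteness of $\mathcal{U}$ enters through $\overline{\bigcup U_i}=\bigcup\overline{U_i}$), so their higher \v Cech cohomology vanishes and the spectral sequence collapses onto the column $p=0$, whence $\epsilon(E)$ induces an isomorphism on cohomology. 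The genuine content here — softness, the flatness of $\mathcal{A}^0(V)$ over $\mathcal{A}^0(X)$, and the resulting exactness of the \v Cech complexes of the presheaves $V\mapsto M\otimes_{\mathcal{A}^0(X)}\mathcal{A}^0(V)$ — is precisely what Appendix \ref{appendix: some generalities of soft sheaves} supplies, and this is where I expect the only real work to lie; everything else is bookkeeping. (For cohesive rather than merely quasi-cohesive modules the $E^q$ are finitely generated projective, which removes the subtleties of tensoring with infinite products and makes the softness argument immediate, as in \cite{ben2013milnor}.)
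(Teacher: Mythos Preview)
Your outline is correct and is precisely the natural verification that the paper has in mind: the paper's own proof of this proposition consists of the single sentence ``It is a routine check.'' Your three steps---the $r^*\dashv r_*$ identification of morphism spaces, the bookkeeping for the differentials via (\ref{equation: differential on morphisms of twisted complexes}) and (\ref{equation: composition of maps between graded sheaves}), and the partition-of-unity contraction of the augmented \v Cech complex (using Lemma \ref{lemma: partition of unity}) after reducing to $\mathcal{A}^0$ via $\mathrm{For}$---are exactly how one carries out that check, so there is nothing to compare.
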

\begin{proof}
It is a routine check.
\end{proof}

Moreover, for a refinement $\{V_j\}$ of the open cover $\{U_i\}$, we could define the twisted functor and the sheafification functor in the same way.

\begin{prop}\label{prop: adjunction of twisted functor and sheafification functor refinement}
$$
\mathcal{T}: \text{Tw}(X, \mathcal{C}_{\mathcal{A}}, U_i)\rightleftarrows \text{Tw}(X, \mathcal{C}_{\mathcal{A}}, V_j): \mathcal{S}
$$
is a pair of adjoint functors. Moreover, the unit morphism of the adjunction $\epsilon(E): E\to \mathcal{S}\circ\mathcal{T}(E)$ is a quasi-isomorphism (in the sense of Definition \ref{defi: quasi-isomorphism between twisted complexes}) for any object $E\in \text{Tw}(X, \mathcal{C}_{\mathcal{A}}, U_i)$.  The same results applies to
$\mathcal{T}: \text{Tw}(X, \mathcal{C}_{\mathcal{A}^0}, U_i)\rightleftarrows \text{Tw}(X, \mathcal{C}_{\mathcal{A}^0}, V_j): \mathcal{S}$.
\end{prop}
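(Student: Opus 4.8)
The plan is to follow the proof of Proposition~\ref{prop: adjunction of twisted functor and sheafification functor}, of which this is the ``relative'' version: one recovers Proposition~\ref{prop: adjunction of twisted functor and sheafification functor} as the special case in which $\{U_i\}=\{X\}$. First I would fix a refinement map $\tau$ with $V_j\subset U_{\tau(j)}$ and define the two functors by the same recipes as in Definitions~\ref{defi: twisted functor} and~\ref{defi: sheaf associated to twisted complex}. The functor $\mathcal{T}$ restricts a twisted complex $(E_i,a)$ on $\{U_i\}$ along $\tau$ (the components become $E_{\tau(j)}|_{V_j}$ and the twisting is pulled back via $\tau$ in the \v{C}ech direction). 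The functor $\mathcal{S}$ sends a twisted complex $\mathcal{F}=(F_j,b)$ on $\{V_j\}$ to the twisted complex on $\{U_i\}$ whose component over $U_i$ is the \v{C}ech totalization
$$
\mathcal{S}(\mathcal{F})^n_i=\prod_{p+q=n}\prod_{j_0\ldots j_p}F^q_{j_0}|_{V_{j_0\ldots j_p}\cap U_i}
$$
with connection $\delta+b$ --- this is Definition~\ref{defi: sheaf associated to twisted complex} applied to the induced cover $\{V_j\cap U_i\}$ of $U_i$ --- glued over $U_{ii'}$ by the identity, the two totalizations visibly agreeing there. One checks, exactly as in those definitions, that both assignments land in the asserted dg-categories: the Maurer--Cartan equation~(\ref{equation: MC for twisted complex}) and the two side conditions of Definition~\ref{defi: twisted complex} are inherited, the assignments are dg-functors, and they commute with $\For$. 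The same recipes work for $\mathcal{C}_{\mathcal{A}^0}$.

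For the adjunction I would unwind the two morphism complexes. Using the universal property of the products appearing in $\mathcal{S}$ together with the definition of $\mathcal{T}$, a morphism $\mathcal{T}(\mathcal{E})\to\mathcal{F}$ in $\text{Tw}(X,\mathcal{C}_{\mathcal{A}},V_j)$ and a morphism $\mathcal{E}\to\mathcal{S}(\mathcal{F})$ in $\text{Tw}(X,\mathcal{C}_{\mathcal{A}},U_i)$ are both described by one and the same collection of morphisms of quasi-cohesive modules, indexed by chains in $\mathcal{U}$ and in $\mathcal{V}$ and restricted to the corresponding intersections; tracing through the differentials of Equation~(\ref{equation: differential on morphisms of twisted complexes}) --- assembled from the two \v{C}ech differentials and from left and right composition with the relevant twistings --- shows that they are carried to each other up to the usual signs. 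This is the ``routine check''. The unit $\epsilon(\mathcal{E})\colon\mathcal{E}\to\mathcal{S}\mathcal{T}(\mathcal{E})$ is the evident morphism: over $U_i$ it is supported in \v{C}ech-degree zero and built from the degree-zero transition morphisms $a^{1,0}$ of $\mathcal{E}$ (when $\{U_i\}=\{X\}$ these are identities and $\epsilon$ is just the inclusion of the \v{C}ech-degree-zero summand, as in Proposition~\ref{prop: adjunction of twisted functor and sheafification functor}), and that $\epsilon(\mathcal{E})$ is closed of degree zero follows from the low-order instances of Equation~(\ref{equation: MC for twisted complex explicit}) for $\mathcal{E}$.

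It remains to see that $\epsilon(\mathcal{E})$ is a quasi-isomorphism, and here I would apply $\For$ and argue $U_i$ by $U_i$ as permitted by Definition~\ref{defi: quasi-isomorphism between twisted complexes}. Over each $U_i$ the $(0,0)$-component of $\For(\epsilon(\mathcal{E}))$ is precisely the augmentation of $\For(E_i)|_{U_i}$ into the total complex of the \v{C}ech--module bicomplex of $\For(\mathcal{T}(\mathcal{E}))|_{U_i}$ for the cover $\{V_j\cap U_i\}$ of $U_i$ (the \v{C}ech differential being recovered from $\delta$ together with the $b^{1,0}$-terms, exactly as in the proof of Proposition~\ref{prop: adjunction of twisted functor and sheafification functor}). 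Since each module occurring in $\For(\mathcal{T}(\mathcal{E}))$ is a module over the soft sheaf of rings $\mathcal{A}^0$ and is therefore itself soft, its \v{C}ech complex with respect to the (arbitrary, merely locally finite) cover $\{V_j\cap U_i\}$ is a resolution by the facts recalled in Appendix~\ref{appendix: some generalities of soft sheaves}; hence this augmentation is a quasi-isomorphism, and as this holds for every $i$, so is $\epsilon(\mathcal{E})$. The $\mathcal{C}_{\mathcal{A}^0}$-statement is the $\For$-image of the same argument. The main obstacle is exactly this last point --- the vanishing in positive \v{C}ech degrees of the \v{C}ech complex of a soft $\mathcal{A}^0$-module over a cover that is only locally finite, not assumed good --- which is why the softness of $\mathcal{A}^0=C^\infty$ and the generalities of Appendix~\ref{appendix: some generalities of soft sheaves} are essential; the remaining verifications (the Maurer--Cartan bookkeeping, the sign check in the adjunction, and the chain-map property of $\epsilon$) are routine but tedious.
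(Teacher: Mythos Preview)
Your proposal is correct and is precisely a detailed unpacking of what the paper calls ``a routine check'': the paper gives no argument at all beyond that phrase, so your construction of the relative $\mathcal{T}$ and $\mathcal{S}$, the bookkeeping verification of the adjunction, and the reduction of the unit's quasi-isomorphism to \v{C}ech acyclicity of soft $\mathcal{A}^0$-modules are exactly the details one would supply. The only mild overreach is that Appendix~\ref{appendix: some generalities of soft sheaves} does not literally state the \v{C}ech resolution property you invoke, but the partition of unity recorded in Lemma~\ref{lemma: partition of unity} is the standard ingredient that yields it, so the appeal is legitimate.
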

\begin{proof}
It is a routine check.
\end{proof}

The goal of this paper is to prove that $\mathcal{T}: \mathcal{P}_{\mathcal{A}}(X)\to \text{Tw}(X, \mathcal{P}_{\mathcal{A}}, U_i)$ is a quasi-equivalence of dg-categories.

\section{The gluing of underlying complexes}\label{section: the gluing of underlying complexes}

In this section we study in more details of the adjunction
 $$
\mathcal{T}: \mathcal{C}_{\mathcal{A}^0}(X)\rightleftarrows \text{Tw}(X, \mathcal{C}_{\mathcal{A}^0}, U_i): \mathcal{S}
$$

\begin{lemma}\label{lemma: sheafification is perfect}
For a twisted perfect complex $\mathcal{E}=(E_i,a)$, its sheafification $\mathcal{S}(\mathcal{E})$ is a perfect complex of $\mathcal{A}^0$-modules on $X$. If in addition $\mathcal{E}=(E_i,a)$ is globally bounded, then $\mathcal{S}(\mathcal{E})$ is also globally bounded.
\end{lemma}
\begin{proof}
See \cite[Proposition 3.5 and Corollary 3.8]{wei2016twisted}
\end{proof}

We also need the following result.

\begin{prop}\label{prop: glue of smooth perf complex}
 Let $X$ be a $C^{\infty}$-manifold with $\mathcal{A}^0$ the sheaf of $C^{\infty}$-functions. For any globally bounded perfect complex of $\mathcal{A}^0$-modules $\mathcal{P}$, there exists a bounded complex of finitely generated locally free $\mathcal{A}^0$-modules $\mathcal{R}$ together with a quasi-isomorphism of  complexes of $\mathcal{A}^0$-modules $\phi: \mathcal{R}\to \mathcal{P}$.
\end{prop}
\begin{proof}
The proof is essentially the same of the proof of \cite[Lemma 7.5]{chuang2021maurer}.
\end{proof}

\begin{prop}\label{prop: there is a complex of vector bundles for the sheafification}
 Let $X$ be a $C^{\infty}$-manifold with $\mathcal{A}^0$ the sheaf of $C^{\infty}$-functions. Let  $\{U_i\}$ be a locally finite open cover of $X$. Then for every globally bounded twisted  complex $\mathcal{F}=(F^{\bullet}_i, b)\in \text{Tw}^{\text{B}}(X, \mathcal{P}_{\mathcal{A}^0}, U_i)$, there is an object $\mathcal{E}=(E^{\bullet},d)\in \mathcal{P}_{\mathcal{A}^0(X)}$  together with a quasi-isomorphism  $\phi: \mathcal{E}\to \mathcal{S}(\mathcal{F})$ in $\mathcal{P}_{\mathcal{A}^0(X)}$.

 Moreover, the corresponding morphism $\eta_{\mathcal{F}}\circ \mathcal{T}(\phi):\mathcal{T}(\mathcal{E})\to \mathcal{F}$ is a homotopy equivalence in $\text{Tw}(X, \mathcal{P}_{\mathcal{A}^0}, U_i)$.
 \end{prop}

\begin{proof}
 By Lemma \ref{lemma: sheafification is perfect},  $\mathcal{S}(\mathcal{F})$ is a globally bounded  perfect complex of $\mathcal{A}^0$-modules on $X$. Then by Proposition \ref{prop: glue of smooth perf complex}, there exists a bounded complex of finitely generated locally free $\mathcal{A}^0$-modules $\mathcal{E}$ together with a quasi-isomorphism of  complexes of $\mathcal{A}^0$-modules $\phi: \mathcal{E}\to \mathcal{S}(\mathcal{F})$. This proves the first part of the proposition.

We can then consider $\mathcal{T}(\phi): \mathcal{T}(\mathcal{E})\to \mathcal{T}\mathcal{S}(\mathcal{F})$. By definition $\mathcal{T}(\phi)$ is a quasi-isomorphism as in Definition \ref{defi: quasi-isomorphism between twisted complexes}. By \cite[Proposition 3.13]{wei2016twisted} , $\eta_{\mathcal{F}}:  \mathcal{T}\mathcal{S}(\mathcal{F})\to \mathcal{F}$ is also a quasi-isomorphism, so is the composition $\eta_{\mathcal{F}}\circ \mathcal{T}(\phi):\mathcal{T}(\mathcal{E})\to \mathcal{F}$. Since $\mathcal{T}(\mathcal{E})$ and $\mathcal{F}$ are both in  $\mathcal{P}_{\mathcal{A}^0(X)}$, by Proposition \ref{prop: quasi-isomorphism of twisted complexes  in PA is homotopy equivalence} $\eta_{\mathcal{F}}\circ \mathcal{T}(\phi)$ is a homotopy equivalence.
\end{proof}

\section{The descent of cohesive modules}\label{section: the descent of cohesive modules}

We first recall the following theorem on objects in $\mathcal{P}_{\mathcal{A}}$.

\begin{thm}\label{thm: quasi-representability of h tilde X}[\cite{block2010duality} Theorem 3.13]
Let $\mathcal{A}=(\mathcal{A}^{\bullet},d,c)$ be a curved dg-algebra and $Q=(\mathcal{Q},\mathbb{Q})$ be a quasi-cohesive
module over $\mathcal{A}$. Then there is an object $E$ in $\mathcal{P}_{\mathcal{A}}$ together with a quasi-isomorphism $\phi: E\to Q $  in $\mathcal{C}_{\mathcal{A}}$, under either of the two following conditions:
\begin{enumerate}
\item $Q$ is a quasi-finite quasi-cohesive module (see Definition 3.12 of \cite{block2010duality});
\item $\mathcal{A}$ is flat over $\mathcal{A}^0$ and there is a bounded complex $(E,\mathbb{E}^0)$ of finitely generated projective right $\mathcal{A}^0$-modules and an $\mathcal{A}^0$-linear quasi-isomorphism $\Theta^0:(E,\mathbb{E}^0)\to (\mathcal{Q},\mathbb{Q}^0)$.
\end{enumerate}
\end{thm}
\begin{proof}
See \cite{block2010duality} Theorem 3.13.
\end{proof}

Basically Theorem \ref{thm: quasi-representability of h tilde X} says that, under mild conditions, we can lift quasi-isomorphisms between  underline cochain complexes to quasi-isomorphisms between cohesive modules . We will combine 
Theorem \ref{thm: quasi-representability of h tilde X} and Proposition \ref{prop: glue of smooth perf complex} in the descent problem of cohesive modules.

\begin{prop}\label{prop: there is a cohesive module for the sheafification, general case}
 Let $X$ be a  complex manifold and $\{U_i\}$ be a locally finite open cover, then for every globally bounded twisted  complex $\mathcal{F} \in \text{Tw}^{\text{B}}(X, \mathcal{P}_{\mathcal{A}}, U_i)$, there is an object $\mathcal{E} \in \mathcal{P}_{\mathcal{A}(X)}$  together with a quasi-isomorphism  $\phi:  \mathcal{E}\to  \mathcal{S}(\mathcal{F})$ in $\mathcal{C}_{\mathcal{A}(X)}$.

 Moreover, the corresponding morphism $\eta_{\mathcal{F}}\circ \mathcal{T}(\phi):\mathcal{T}(\mathcal{E})\to \mathcal{F}$ is a homotopy equivalence in $\text{Tw}^{\text{B}}(X, \mathcal{P}_{\mathcal{A}}, U_i)$.
 \end{prop}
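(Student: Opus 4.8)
The plan is to reduce the statement to Proposition~\ref{prop: there is a complex of vector bundles for the sheafification, general case}, which handles the underlying complexes, and then to promote the resulting complex of vector bundles to a genuine cohesive module by means of Block's quasi-representability theorem, Theorem~\ref{thm: quasi-representability of h tilde X}. First I would note that the hypotheses of Proposition~\ref{prop: there is a complex of vector bundles for the sheafification, general case} hold in our setting: $X$ being a compact complex manifold, the sheaf $\mathcal{A}^{0,0}=C^\infty_X$ is soft, and every locally finite open cover of $X$ is finite and admits a finite good refinement (for instance by geodesically convex balls of an auxiliary Riemannian metric, whose finite intersections are convex, hence contractible). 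I would also record the elementary compatibilities $\For\circ\mathcal{S}=\mathcal{S}\circ\For$ and $\For\circ\mathcal{T}=\mathcal{T}\circ\For$, together with the $\For$-equivariance of the unit $\epsilon$ and counit $\eta$ of the adjunction of Proposition~\ref{prop: adjunction of twisted functor and sheafification functor}; these follow at once from the definitions, since $\mathcal{S}$ and $\mathcal{T}$ are built from restriction, pushforward, finite products and the \v{C}ech-type operator $\delta+a$, all of which commute with the forgetful functor (Section~\ref{section: review of cohesive modules}).

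Now apply $\For$ to $\mathcal{F}$ to obtain $\For(\mathcal{F})\in\text{Tw}(X,\mathcal{P}_{\mathcal{A}^0},U_i)$, and invoke Proposition~\ref{prop: there is a complex of vector bundles for the sheafification, general case}: it yields a bounded complex $(E^\bullet,\mathbb{E}^0)\in\mathcal{P}_{\mathcal{A}^0(X)}$ of finitely generated projective $\mathcal{A}^0(X)$-modules and an $\mathcal{A}^0(X)$-linear quasi-isomorphism $\Theta^0\colon(E^\bullet,\mathbb{E}^0)\to\For(\mathcal{S}(\mathcal{F}))=\mathcal{S}(\For(\mathcal{F}))$ such that $\eta_{\For(\mathcal{F})}\circ\mathcal{T}(\Theta^0)\colon\mathcal{T}(E^\bullet,\mathbb{E}^0)\to\For(\mathcal{F})$ is a homotopy equivalence in $\text{Tw}(X,\mathcal{P}_{\mathcal{A}^0},U_i)$. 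Since $\mathcal{A}(X)=(\mathcal{A}^{0,\bullet}(X),\bar{\partial},0)$ is the module of smooth sections of a finite rank bundle, it is finitely generated projective, in particular flat, over $\mathcal{A}^0(X)=C^\infty(X)$; hence condition~(2) of Theorem~\ref{thm: quasi-representability of h tilde X} applies to the quasi-cohesive module $Q=\mathcal{S}(\mathcal{F})$ equipped with the complex $(E^\bullet,\mathbb{E}^0)$ and the map $\Theta^0$. Tracing through the inductive construction in its proof, one obtains a cohesive module $\mathcal{E}=(E^\bullet,\mathbb{E})\in\mathcal{P}_{\mathcal{A}(X)}$ with $\For(\mathcal{E})=(E^\bullet,\mathbb{E}^0)$, together with a quasi-isomorphism $\phi\colon\mathcal{E}\to\mathcal{S}(\mathcal{F})$ in $\mathcal{C}_{\mathcal{A}(X)}$ whose zeroth component is $\Theta^0$, so that $\For(\phi)=\Theta^0$. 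This establishes the first assertion.

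For the second assertion, $\mathcal{T}(\mathcal{E})$ and $\mathcal{F}$ both lie in $\text{Tw}(X,\mathcal{P}_{\mathcal{A}},U_i)$, so by Proposition~\ref{prop: quasi-isomorphism of twisted complexes  in PA is homotopy equivalence} it suffices to check that $\eta_{\mathcal{F}}\circ\mathcal{T}(\phi)$ is a quasi-isomorphism, that is, by Definition~\ref{defi: quasi-isomorphism between twisted complexes}, that $\For(\eta_{\mathcal{F}}\circ\mathcal{T}(\phi))$ is a quasi-equivalence in $\text{Tw}(X,\mathcal{P}_{\mathcal{A}^0},U_i)$. Using the $\For$-compatibilities from the first paragraph, $\For(\eta_{\mathcal{F}}\circ\mathcal{T}(\phi))=\eta_{\For(\mathcal{F})}\circ\mathcal{T}(\For(\phi))=\eta_{\For(\mathcal{F})}\circ\mathcal{T}(\Theta^0)$, which was produced by Proposition~\ref{prop: there is a complex of vector bundles for the sheafification, general case} as a homotopy equivalence, hence in particular a quasi-isomorphism. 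This finishes the argument.

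I expect that the only real work has already been done in Proposition~\ref{prop: there is a complex of vector bundles for the sheafification, general case}; the point worth stressing here is that $\mathcal{S}(\mathcal{F})$ is merely quasi-cohesive --- its underlying $\mathcal{A}^0(X)$-module is a finite product of pushforwards of modules over the smaller algebras $\mathcal{A}^0(U_{i_0\ldots i_p})$ and is not finitely generated projective over $\mathcal{A}^0(X)$ --- so one cannot stay inside $\mathcal{P}_{\mathcal{A}(X)}$, and the function of Theorem~\ref{thm: quasi-representability of h tilde X} is exactly to replace $\mathcal{S}(\mathcal{F})$ by an honest cohesive module whose underlying complex is the one built in Section~\ref{section: the patching of underlying complexes}. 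Apart from that, the only care needed is the bookkeeping of the $\For$-compatibilities and the verification, by inspection of the proof of Theorem~\ref{thm: quasi-representability of h tilde X}, that the cohesive module it outputs can be chosen with underlying complex $(E^\bullet,\mathbb{E}^0)$ and $\phi^0=\Theta^0$.
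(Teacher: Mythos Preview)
Your proof is correct and follows essentially the same route as the paper's: forget to $\mathcal{P}_{\mathcal{A}^0}$, apply Proposition~\ref{prop: there is a complex of vector bundles for the sheafification, general case} to get $(E^\bullet,\mathbb{E}^0)$ and $\Theta^0$, invoke flatness of $\mathcal{A}^\bullet(X)$ over $\mathcal{A}^0(X)$ to apply Theorem~\ref{thm: quasi-representability of h tilde X}(2), and then deduce the second assertion from the $\For$-compatibilities. The paper's proof is terser but identical in structure; your version usefully makes explicit the $\For$-equivariance of $\mathcal{S}$, $\mathcal{T}$, $\epsilon$, $\eta$ and invokes Proposition~\ref{prop: quasi-isomorphism of twisted complexes  in PA is homotopy equivalence} to pass from quasi-isomorphism to homotopy equivalence in the second half, whereas the paper simply says this ``comes from Proposition~\ref{prop: there is a complex of vector bundles for the sheafification, general case} and the compatibility of the forgetful functor.''
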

 \begin{proof}
 All functors are compatible with the forgetful functor $\For: \mathcal{P}_{\mathcal{A}}\to \mathcal{P}_{\mathcal{A}^0}$. By  Proposition \ref{prop: there is a complex of vector bundles for the sheafification}, for every twisted  complex $\mathcal{F} \in \text{Tw}^{\text{B}}(X, \mathcal{P}_{\mathcal{A}}, U_i)$, there is an object $\mathcal{E}_0 \in \mathcal{P}_{\mathcal{A}^0(X)}$  together with a quasi-isomorphism $\phi_0: \mathcal{E}_0\to \mathcal{S}(\For \mathcal{F})=\For(\mathcal{S}(\mathcal{F}))$.

 It is clear that $\mathcal{A}^{\bullet}(X)$ is flat over $\mathcal{A}^0(X)$. Therefore Condition 2 in Theorem \ref{thm: quasi-representability of h tilde X} is satisfied, hence by Theorem \ref{thm: quasi-representability of h tilde X} there exists an object $\mathcal{E} \in \mathcal{P}_{\mathcal{A}(X)}$  together with a quasi-isomorphism $\phi: \mathcal{E}\to \mathcal{S}(\mathcal{F})$ in $\mathcal{C}_{\mathcal{A}(X)}$ such that $\For(\mathcal{E})=\mathcal{E}_0$ and $\For(\phi)=\phi_0$.

 The second half of the claim comes from Proposition \ref{prop: there is a complex of vector bundles for the sheafification} and the compatibility of the forgetful functor.
 \end{proof}

 \begin{prop}\label{prop: cohesive modules and the sheafification of its twist are homotopy equivalent}
 Let $\mathcal{E}$ be an object in $\mathcal{P}_{\mathcal{A}(X)}$. Apply Proposition \ref{prop: there is a cohesive module for the sheafification, general case} to $\mathcal{T}(\mathcal{E})$, we get $\widetilde{\mathcal{E}}$  in $\mathcal{P}_{\mathcal{A}(X)}$ together with a quasi-isomorphism $\phi: \widetilde{\mathcal{E}}\to \mathcal{S}(\mathcal{T}(\mathcal{E}))$. Then $\mathcal{E}$ and $\widetilde{\mathcal{E}}$ are homotopy equivalent in $\mathcal{P}_{\mathcal{A}(X)}$.
 \end{prop}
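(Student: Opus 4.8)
The plan is to exhibit an explicit closed degree-zero morphism $\eta\colon\widetilde{\mathcal{E}}\to\mathcal{E}$ in $\mathcal{P}_{\mathcal{A}(X)}$ and to verify that it is a quasi-isomorphism in the sense of Definition \ref{defi: quasi-isomorphism between quasi-cohesive modules}; by Proposition \ref{prop: homotopy equivalences in PA} this will force $\eta$ to be a homotopy equivalence, which is exactly the assertion. The starting observation is that $\mathcal{E}$ and $\widetilde{\mathcal{E}}$ are joined by a roof of quasi-isomorphisms in $\mathcal{C}_{\mathcal{A}(X)}$: on one side Proposition \ref{prop: adjunction of twisted functor and sheafification functor} supplies the adjunction unit $\epsilon(\mathcal{E})\colon\mathcal{E}\to\mathcal{S}\circ\mathcal{T}(\mathcal{E})$, and on the other side $\phi\colon\widetilde{\mathcal{E}}\to\mathcal{S}(\mathcal{T}(\mathcal{E}))$ is the quasi-isomorphism furnished by Proposition \ref{prop: there is a cohesive module for the sheafification, general case} applied to $\mathcal{F}=\mathcal{T}(\mathcal{E})$. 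Both ends of the roof $\mathcal{E}\xrightarrow{\epsilon(\mathcal{E})}\mathcal{S}\mathcal{T}(\mathcal{E})\xleftarrow{\phi}\widetilde{\mathcal{E}}$ lie in $\mathcal{P}_{\mathcal{A}(X)}$.

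Next I would transport this roof into $\Modb\mathcal{P}_{\mathcal{A}}$ along the Yoneda embedding. Since $\mathcal{E},\widetilde{\mathcal{E}}\in\mathcal{P}_{\mathcal{A}(X)}$ we have $\tilde{h}_{\mathcal{E}}=h_{\mathcal{E}}$ and $\tilde{h}_{\widetilde{\mathcal{E}}}=h_{\widetilde{\mathcal{E}}}$, and these are representable, hence h-projective (cofibrant) objects of $\Modb\mathcal{P}_{\mathcal{A}}$. By Proposition \ref{prop: quasi-isomorphism of quasi-cohesive modules induces quasi-isomorphism for Yoneda embedding} the images $\tilde{h}_{\epsilon(\mathcal{E})}\colon h_{\mathcal{E}}\to\tilde{h}_{\mathcal{S}\mathcal{T}(\mathcal{E})}$ and $\tilde{h}_{\phi}\colon h_{\widetilde{\mathcal{E}}}\to\tilde{h}_{\mathcal{S}\mathcal{T}(\mathcal{E})}$ are quasi-isomorphisms in $\Modb\mathcal{P}_{\mathcal{A}}$. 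Because $h_{\widetilde{\mathcal{E}}}$ is h-projective, the functor $H^0(\Modb\mathcal{P}_{\mathcal{A}})(h_{\widetilde{\mathcal{E}}},-)$ inverts quasi-isomorphisms, so $\tilde{h}_{\phi}$ lifts through $\tilde{h}_{\epsilon(\mathcal{E})}$: there is $\mu\colon h_{\widetilde{\mathcal{E}}}\to h_{\mathcal{E}}$ with $\tilde{h}_{\epsilon(\mathcal{E})}\circ\mu$ homotopic to $\tilde{h}_{\phi}$, and by two-out-of-three $\mu$ is a quasi-isomorphism, hence a homotopy equivalence between h-projective modules. Full faithfulness of the Yoneda embedding writes $\mu=h_{\eta}$ for a unique closed degree-zero $\eta\colon\widetilde{\mathcal{E}}\to\mathcal{E}$ in $\mathcal{P}_{\mathcal{A}(X)}$, and — again by full faithfulness, now on homotopy categories — $\eta$ is a homotopy equivalence in $\mathcal{P}_{\mathcal{A}(X)}$.

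I do not expect a genuine obstacle here: the argument is essentially the bookkeeping of dragging a zig-zag of quasi-isomorphisms across the Yoneda embedding, combined with the characterization of homotopy equivalences in $\mathcal{P}_{\mathcal{A}}$. The one point that must be handled with (routine) care is the h-projectivity of representable $\mathcal{P}_{\mathcal{A}}$-modules together with the fact that a quasi-isomorphism between h-projective modules is a homotopy equivalence; both are standard facts about dg-modules over a dg-category. An alternative route that bypasses $\Modb\mathcal{P}_{\mathcal{A}}$ entirely is to lift $\phi$ through $\epsilon(\mathcal{E})$ directly inside $\mathcal{C}_{\mathcal{A}(X)}$ by the same spectral-sequence/obstruction argument used in Proposition \ref{prop: homotopy equivalences in PA} and Lemma \ref{lemma: lift under quasi-isomorphism in twisted complex} — legitimate because $\widetilde{\mathcal{E}}$ is a bounded complex of finitely generated projective $\mathcal{A}^0(X)$-modules — producing $\eta\colon\widetilde{\mathcal{E}}\to\mathcal{E}$ with $\epsilon(\mathcal{E})\circ\eta$ homotopic to $\phi$; then $\For(\eta)$ is a quasi-isomorphism of complexes by two-out-of-three, and Proposition \ref{prop: homotopy equivalences in PA} again concludes that $\eta$ is a homotopy equivalence in $\mathcal{P}_{\mathcal{A}(X)}$.
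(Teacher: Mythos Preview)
Your proposal is correct and follows essentially the same route as the paper: both build the roof $\mathcal{E}\xrightarrow{\epsilon}\mathcal{S}\mathcal{T}(\mathcal{E})\xleftarrow{\phi}\widetilde{\mathcal{E}}$, push it into $\Modb\mathcal{P}_{\mathcal{A}}$ via Proposition~\ref{prop: quasi-isomorphism of quasi-cohesive modules induces quasi-isomorphism for Yoneda embedding}, lift $\phi$ through $\epsilon$ to obtain $\psi\colon\widetilde{\mathcal{E}}\to\mathcal{E}$, and then invoke Proposition~\ref{prop: homotopy equivalences in PA}. The paper's version is terser---it does not spell out the h-projectivity of representables that justifies the lifting step---so your write-up is, if anything, more complete on that point.
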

 \begin{proof}
 By Proposition \ref{prop: adjunction of twisted functor and sheafification functor}, the unit morphism $\epsilon: \mathcal{E}\to \mathcal{S}(\mathcal{T}(\mathcal{E}))$ is a quasi-isomorphism. Then by Proposition \ref{prop: quasi-isomorphism of quasi-cohesive modules induces quasi-isomorphism for Yoneda embedding}, $\phi: \widetilde{\mathcal{E}}\to \mathcal{S}(\mathcal{T}(\mathcal{E}))$ could be lifted to $\psi: \widetilde{\mathcal{E}}\to \mathcal{E}$ such that $\epsilon\circ \psi=\phi$ up to homotopy. Since both $\epsilon$ and $\phi$ are quasi-isomorphisms, so is $\psi$. Since both $\widetilde{\mathcal{E}}$ and $\mathcal{E}$ are in $\mathcal{P}_{\mathcal{A}(X)}$, by Proposition \ref{prop: homotopy equivalences in PA}, $\psi$ is a homotopy equivalence.
 \end{proof}

We will use the following lemma on dg-categories  in the proof.

\begin{lemma}\label{lemma: adjunctions of big and small dg-categories}[\cite[Lemma 2.3]{ben2013milnor}]
Suppose that $\mathcal{C}$ and $\mathcal{D}$ are dg-categories, which are full dg-subcategories
of dg-categories $\mathcal{C}_{\text{big}}$ and $\mathcal{D}_{\text{big}}$, respectively. Let $F$
be a dg-functor from $\mathcal{C}_{\text{big}}$ to $\mathcal{D}_{\text{big}}$ which carries $\mathcal{C}$ into $\mathcal{D}$. Let $G$ be a dg-functor
from $\mathcal{D}_{\text{big}}$ to $\mathcal{C}_{\text{big}}$ which is right adjoint to $F$. Suppose that $F$ and $G$ satisfy the following conditions:
\begin{enumerate}
\item For each $d\in \mathcal{D}$ we have an object $c_d\in \mathcal{C}$ and a quasi-isomorphism $h_{c_d}\to \tilde{h}_{G(d)}$ in $\Modb \mathcal{C}$, where $h$ and $\tilde{h}$ are Yoneda embeddings;

\item  For each $d\in \mathcal{D}$, let $c_d$ be as above and $\Theta_d\in \mathcal{C}_{\text{big}}(c_d,G(d))$ correspond to the identity of $c_d$ under $h_{c_d}(c_d)\to h_{G(d)}(c_d)$. Then the morphism
$$
\Lambda_d:=\eta_d\circ F(\Theta_d): F(c_d)\to d
$$
is a homotopy equivalence in the dg-category $\mathcal{D}$, where $\eta: F\circ G\to \text{id}_{\mathcal{D}}$ is the counit of the adjunction;

\item For each $c\in \mathcal{C}$, $c$ and $c_{F(c)}$ are homotopy equivalent in $\mathcal{C}$.
\end{enumerate}
Then $F|_{\mathcal{C}}$ is a dg-quasi-equivalence from $\mathcal{C}$ to $\mathcal{D}$.
\end{lemma}
\begin{proof}
See \cite[Lemma 2.3]{ben2013milnor}.
\end{proof}

Now we are ready to prove the main theorem of this paper.

\begin{thm}\label{thm: cohesive modules satisfies descent for open covers}
Let $X$ be a  complex manifold and $\{U_i\}$ be a locally finite open cover of $X$. Let $\mathcal{A}=(\mathcal{A}^{0,\bullet},\bar{\partial},0)$ be the Dolbeault dg-algebra on $X$ and $\mathcal{P}_{\mathcal{A}}$ be the dg-category of cohesive modules. Let $\text{Tw}^{\text{B}}(X, \mathcal{P}_{\mathcal{A}}, U_i)$ be the dg-category of globally bounded twisted complexes on $X$. Then the twisting functor
$$
\mathcal{T}: \mathcal{P}_{\mathcal{A}(X)}\to \text{Tw}^{\text{B}}(X, \mathcal{P}_{\mathcal{A}}, U_i)
$$
is a dg-quasi-equivalence of dg-categories.

In particular, if $\{U_i\}$ be a  finite open cover of $X$, then the twisting functor
$$
\mathcal{T}: \mathcal{P}_{\mathcal{A}(X)}\to \text{Tw}(X, \mathcal{P}_{\mathcal{A}}, U_i)
$$
is a dg-quasi-equivalence of dg-categories.
\end{thm}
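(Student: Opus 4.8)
The plan is to reduce the statement to Lemma~\ref{lemma: adjunctions of big and small dg-categories}, applied to the adjoint pair $\mathcal{T}\rightleftarrows\mathcal{S}$ of Proposition~\ref{prop: adjunction of twisted functor and sheafification functor}. Concretely, I would set $\mathcal{C}=\mathcal{P}_{\mathcal{A}(X)}$ inside $\mathcal{C}_{\mathrm{big}}=\mathcal{C}_{\mathcal{A}(X)}$, and $\mathcal{D}=\text{Tw}(X,\mathcal{P}_{\mathcal{A}},U_i)$ inside $\mathcal{D}_{\mathrm{big}}=\text{Tw}(X,\mathcal{C}_{\mathcal{A}},U_i)$, with $F=\mathcal{T}$ and $G=\mathcal{S}$. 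The twisting functor $\mathcal{T}$ carries $\mathcal{P}_{\mathcal{A}(X)}$ into $\text{Tw}(X,\mathcal{P}_{\mathcal{A}},U_i)$, while $\mathcal{S}$ really does require the larger target $\mathcal{C}_{\mathcal{A}(X)}$ — which is exactly why we pass to the ``big'' categories. It then remains to verify the three hypotheses of Lemma~\ref{lemma: adjunctions of big and small dg-categories}.

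For hypothesis (1): given $\mathcal{F}\in\text{Tw}(X,\mathcal{P}_{\mathcal{A}},U_i)$, Proposition~\ref{prop: there is a cohesive module for the sheafification, general case} supplies an object $\mathcal{E}\in\mathcal{P}_{\mathcal{A}(X)}$ and a quasi-isomorphism $\phi:\mathcal{E}\to\mathcal{S}(\mathcal{F})$ in $\mathcal{C}_{\mathcal{A}(X)}$; I would take $c_{\mathcal{F}}:=\mathcal{E}$. Since $\mathcal{A}^{\bullet}(X)$ is flat over $\mathcal{A}^0(X)$, Theorem~\ref{thm: quasi-representability of h tilde X} (equivalently Proposition~\ref{prop: quasi-isomorphism of quasi-cohesive modules induces quasi-isomorphism for Yoneda embedding}, noting that $\tilde h_{\mathcal{E}}=h_{\mathcal{E}}$ as $\mathcal{P}_{\mathcal{A}}$-modules because $\mathcal{E}$ is cohesive) shows that $h_{\phi}:h_{\mathcal{E}}\to\tilde h_{\mathcal{S}(\mathcal{F})}$ is a quasi-isomorphism in $\Modb\mathcal{P}_{\mathcal{A}(X)}$, which is hypothesis (1); moreover the element $\Theta_{\mathcal{F}}$ of the lemma, which corresponds to $\mathrm{id}_{\mathcal{E}}$ under the evaluation $h_{\mathcal{E}}(\mathcal{E})\to\tilde h_{\mathcal{S}(\mathcal{F})}(\mathcal{E})$, can be taken to be $\phi$ itself. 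For hypothesis (2): with $\Theta_{\mathcal{F}}=\phi$, the morphism $\Lambda_{\mathcal{F}}=\eta_{\mathcal{F}}\circ\mathcal{T}(\phi):\mathcal{T}(\mathcal{E})\to\mathcal{F}$ is precisely the map appearing in the ``moreover'' clause of Proposition~\ref{prop: there is a cohesive module for the sheafification, general case}, hence a homotopy equivalence in $\text{Tw}(X,\mathcal{P}_{\mathcal{A}},U_i)$.

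For hypothesis (3): given $\mathcal{E}\in\mathcal{P}_{\mathcal{A}(X)}$, the object $c_{\mathcal{T}(\mathcal{E})}$ obtained by applying Proposition~\ref{prop: there is a cohesive module for the sheafification, general case} to $\mathcal{T}(\mathcal{E})$ is the object $\widetilde{\mathcal{E}}$ of Proposition~\ref{prop: cohesive modules and the sheafification of its twist are homotopy equivalent}, which asserts that $\mathcal{E}$ and $\widetilde{\mathcal{E}}$ are homotopy equivalent in $\mathcal{P}_{\mathcal{A}(X)}$. With all three hypotheses verified, Lemma~\ref{lemma: adjunctions of big and small dg-categories} yields that $\mathcal{T}|_{\mathcal{P}_{\mathcal{A}(X)}}$ is a dg-quasi-equivalence onto $\text{Tw}(X,\mathcal{P}_{\mathcal{A}},U_i)$, which is the theorem. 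Since the substantive content is packed into Propositions~\ref{prop: there is a cohesive module for the sheafification, general case}--\ref{prop: cohesive modules and the sheafification of its twist are homotopy equivalent} (which rest on the patching of underlying complexes in Section~\ref{section: the patching of underlying complexes} together with the quasi-representability Theorem~\ref{thm: quasi-representability of h tilde X}), the only remaining obstacle is the bookkeeping of matching the abstract data $(c_d,\Theta_d,\Lambda_d,\eta)$ demanded by Lemma~\ref{lemma: adjunctions of big and small dg-categories} with the concrete objects and morphisms produced above — in particular checking that the counit $\eta$ of the adjunction $\mathcal{T}\rightleftarrows\mathcal{S}$ is the very one used to build $\Lambda_{\mathcal{F}}$, and that the Yoneda-theoretic identification of $\Theta_{\mathcal{F}}$ with $\phi$ is compatible with homotopies.
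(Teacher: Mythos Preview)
Your proposal is correct and follows essentially the same route as the paper: apply Lemma~\ref{lemma: adjunctions of big and small dg-categories} to the adjunction $\mathcal{T}:\mathcal{C}_{\mathcal{A}(X)}\rightleftarrows\text{Tw}(X,\mathcal{C}_{\mathcal{A}},U_i):\mathcal{S}$ with the small/big categories as you specify, and verify the three conditions via Proposition~\ref{prop: there is a cohesive module for the sheafification, general case} together with Proposition~\ref{prop: quasi-isomorphism of quasi-cohesive modules induces quasi-isomorphism for Yoneda embedding}, the ``moreover'' clause of Proposition~\ref{prop: there is a cohesive module for the sheafification, general case}, and Proposition~\ref{prop: cohesive modules and the sheafification of its twist are homotopy equivalent}, respectively. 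Your additional remarks identifying $\Theta_{\mathcal{F}}$ with $\phi$ and flagging the compatibility of the counit $\eta$ are sound and slightly more explicit than the paper's own write-up.
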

\begin{proof}
We want to apply Lemma \ref{lemma: adjunctions of big and small dg-categories} to the adjunction
$$
\mathcal{T}: \mathcal{C}_{\mathcal{A}(X)} \rightleftarrows \text{Tw}^{\text{B}}(X, \mathcal{C}_{\mathcal{A}}, U_i): \mathcal{S}.
$$
In this case $\mathcal{C}=\mathcal{P}_{\mathcal{A}(X)}$, $\mathcal{C}_{\text{big}}=\mathcal{C}_{\mathcal{A}(X)}$, $\mathcal{D}= \text{Tw}(X, \mathcal{P}_{\mathcal{A}}, U_i)$, and $\mathcal{D}_{\text{big}}= \text{Tw}(X, \mathcal{C}_{\mathcal{A}}, U_i)$.

Condition 1 of Lemma \ref{lemma: adjunctions of big and small dg-categories} is obtained by the first assertion of Proposition \ref{prop: there is a cohesive module for the sheafification, general case} and Proposition \ref{prop: quasi-isomorphism of quasi-cohesive modules induces quasi-isomorphism for Yoneda embedding}. Condition 2 is the second assertion of Proposition \ref{prop: there is a cohesive module for the sheafification, general case}, and Condition 3 is given by Proposition \ref{prop: cohesive modules and the sheafification of its twist are homotopy equivalent}. Therefore Lemma \ref{lemma: adjunctions of big and small dg-categories} tells us that $$
\mathcal{T}: \mathcal{P}_{\mathcal{A}(X)}\to \text{Tw}(X, \mathcal{P}_{\mathcal{A}}, U_i)
$$
is a dg-quasi-equivalence of dg-categories.
\end{proof}

\begin{rmk}
It is interesting to generalize the result in Theorem \ref{thm: cohesive modules satisfies descent for open covers} to \emph{hypercovers} as defined in \cite{dugger2004hypercovers}, which is important because it has been shown that the suitable sheaf condition on dg-presheaves is that they satisfies descent for hypercovers instead of merely covers. See \cite[Lemma 6]{holstein2015morita}. However, the author believes that we should leave this problem to a future project.
\end{rmk}

\bibliography{Descentofcohbib}{}
\bibliographystyle{plain}

\end{document}